 \newcommand{\dfn}[1]{\textcolor{blue}{\emph{#1}}}
\theoremstyle{theorem}
\newtheorem{thm}{Theorem}
\newtheorem{lem}[thm]{Lemma}
\newtheorem{lemma}[thm]{Lemma}
\newtheorem{prop}[thm]{Proposition}
\newtheorem{cor}[thm]{Corollary}
\newtheorem{conj}[thm]{Conjecture}
\theoremstyle{definition}
\newtheorem{defn}[thm]{Definition}
\newtheorem{ex}[thm]{Example}
\newtheorem{example}[thm]{Example}
\newtheorem{remark}[thm]{Remark}
\numberwithin{thm}{section}
\def\m@th{\mathsurround\z@}
\def\cases#1{\left\{\,\vcenter{\normalbaselines\m@th
    \ialign{$##\hfil$&\quad##\hfil\crcr#1\crcr}}\right.}
\def\hang{\hangindent 24pt}
\def\d@nger{\medbreak\begingroup\clubpenalty=10000
  \def\par{\endgraf\endgroup\medbreak} %
  \noindent\hang\hangafter=-2
  \hbox to0pt{\hskip-\hangindent\dbend\hfill}}
\outer\def\danger{\d@nger}
\newcommand{\rr}{\mathbb{R}}
\newcommand{\zz}{\mathbb{Z}}
\renewcommand{\ss}{\mathfrak{S}}
\newcommand{\inj}{\operatorname{Inj}}
\newcommand{\sur}{\operatorname{Sur}}
\newcommand{\rg}{\operatorname{RG}}
\newcommand{\rgnc}{\rg_\mathrm{nc}}
\newcommand{\ra}{\rightarrow}
\newcommand{\sm}{\setminus}
\definecolor{green}{HTML}{006600}
\definecolor{orange}{HTML}{FF6200}
\definecolor{purple}{HTML}{990099}
\definecolor{coral}{HTML}{FF7F50}
\definecolor{mahogany}{HTML}{C04000}
\definecolor{gold}{HTML}{DAA541}
\definecolor{chocolate}{HTML}{D5691E}
\newcommand{\co}[2]{{\cellcolor{#1}{\color{white}\textbf{#2}}}}
\newcommand{\cob}[2]{{\cellcolor{#1}\textbf{#2}}}
\newcommand{\stir}[2]{\left\{#1\atop #2\right\}}
\newcommand{\redcir}[1]{\large{{\color{red}\textcircled{\normalsize{{\color{black}#1}}}}}}
\newcommand{\bluecir}[1]{\large{{\color{blue}\textcircled{\normalsize{{\color{black}#1}}}}}}
\renewcommand{\labelitemii}{\scriptsize\raise2pt\hbox{$\!{\blacktriangleright}$}} 
\newcommand{\eso}{\EuScript{O}}
\newcommand{\calf}{\mathcal{F}}
\newcommand{\call}{\mathcal{L}}
\newcommand{\calp}{\mathcal{P}}
\newcommand{\cals}{\mathcal{S}}
\newcommand{\bfv}{\mathbf{v}}
\newcommand{\bfw}{\mathbf{w}}
\author[Michael Joseph et al.]{Michael Joseph\affiliationmark{1}
  \and James Propp\affiliationmark{2}
  \and Tom Roby\affiliationmark{3}}
\title{Whirling injections, surjections, and other functions between finite sets}
\affiliation{
Dalton State College, Dalton, USA\\
University of Massachusetts Lowell, Lowell, USA\\
  University of Connecticut, Storrs, USA}
\keywords{dynamical algebraic combinatorics, homomesy, injection, noncrossing partition, RG-word, set
partition, surjection, toggles, toggle group, whirling}
\begin{document}
\publicationdata{vol. 27:3}{2025}{22}{10.46298/dmtcs.14126}{2024-08-28; 2024-08-28; 2025-09-12; 2025-10-29}{2025-11-02}
\maketitle
\begin{abstract}
 This paper analyzes a certain action called ``whirling'' that can be defined on any family
of functions between two finite sets equipped with a linear (or cyclic) ordering.  Many maps
of interest in dynamical algebraic combinatorics, such as rowmotion of order ideals,  can be
represented as a composition of 
``toggling'' involutions, each of which modifies its object only locally. Similarly whirling
is made up of locally-acting whirling maps which directly generalize toggles, but cycle
through more than two possible outputs.  In this first paper to focus on whirling, we consider it as
a map on subfamilies of functions between finite sets.

For whirling acting on the set of injections or the set of surjections, we prove that within
each whirling orbit, any two elements of the codomain appear as outputs of functions the same
number of times.  This result can be stated in terms of the homomesy phenomenon, which
occurs when a statistic has the same average across every orbit.  We further explore
homomesy results and conjectures for whirling on restricted-growth words, which correspond
to set partitions.  These results extend the collection of combinatorial objects for which
we have interesting dynamics and homomesy, and open the door to considering whirling in other
contexts.
\end{abstract}

\section{Introduction}\label{sec:intro}

In this paper, we explore the orbits associated with an action that can be defined on any
family $\calf$ of functions $f$ from one finite set $S$ to another finite set $T$, where $S$
comes with a linear ordering and $T$ comes with a cyclic ordering. Without loss of
generality we may take $S = [n]:=\{1,2,\dots,n\}$ with its usual ordering and $T = [k]$ with
the mod $k$ cyclic ordering.  The action is generated by an operation $\bfw:\calf\ra\calf$
(``the whirling map'') that is in turn defined as the composition of simpler maps
$w_i:\calf\ra\calf$ (``whirling at $i$'', with $i$ in $[n]$) that repeatedly add $1\bmod k$
to the value of $f$ at $i$ until we get a function in $\calf$.  Our main results are
examples of the \emph{homomesy}\footnote{Greek 
for ``same middle''} \emph{phenomenon}, introduced by the second and third authors \cite{PR15} and defined as follows.

\begin{defn}
Suppose we have a set $\cals$, an invertible map $w:\cals\ra \cals$ such that every $w$-orbit is finite, and a function (``statistic") $f:\cals\ra\rr$.  Then we say the triple $(\cals,w,f)$ exhibits \dfn{homomesy}
if there exists a constant $c\in\rr$ such that for every $w$-orbit $\eso\subseteq \cals$, \[\frac{1}{\#\eso}\sum\limits_{x\in\eso}f(x)=c.\]  In this case, we say that the function $f$ is \dfn{homomesic with average $c$}, or $\dfn{$c$-mesic}$, under the action of $w$ on $\cals$.
\end{defn}

The homomesy phenomenon is quite widespread throughout combinatorial dynamical systems.  See \cite{Rob16} for a survey article.  Since it is a new area of research over the past roughly 15 years, there is still much to learn
about it, including the best techniques for proving homomesy, what types of actions and statistics commonly yield homomesy, and what consequences can arise from homomesy.

Early on, almost all proven homomesy results were for actions whose order 
was straightforwardly computable in general and 
relatively small compared to the size of the ground set $\cals$.  
So it is rarer and notable to find homomesies for actions with unpredictable orbit sizes, as
is the case with the maps in this paper, which also generally have unknown order.  Other such examples can be found in~\cite{EFG+16,ELM+24}.  In some cases, a statistic which is always an integer has a non-integer average across every orbit, which means all of the orbit sizes must be multiples of the constant average's denominator.  This consequence of homomesy appears in Corollary~\ref{cor:orbit-sizes}.

The whirling maps $w_i$ are a generalization of the \emph{toggle} maps that are heavily studied
in dynamical algebraic combinatorics.  Toggles were first introduced by~\cite{CF95}
as a way of writing the rowmotion map as a composition of simpler operations on order
ideals. \cite{Str18} later generalized them to a much wider variety of settings.
There have been many interesting homomesy and periodicity phenomena discovered for actions
defined in terms of toggles; see
e.g.,~\cite{PR15,Rob16,Had21,EFG+16,JR18,ELM+24}.  

In recent work,~\cite{PR24} consider a whirling action on the set of \emph{$k$-bounded
$P$-partitions} of a poset $P$.  Following in the footsteps of~\cite{Had21}, they prove this to be in equivariant bijection with
rowmotion acting on order ideals of the poset $P\times [k]$ for any finite poset $P$.  They then leverage
this to obtain periodicity and homomesy results for rowmotion acting on the poset ${\sf
V}\times [k]$, which has surprisingly good dynamical properties.
A key point here is that beyond the intrinsic interest of whirling as a generalization of
toggling, its study can facilitate our understanding of rowmotion questions that arise independently. 
A related construction is discussed in~\cite{DTW03}; we have borrowed the term ``whirling'' from the paper's notion of ``whirling tours''.

In Section~\ref{sec:injsur} we consider the whirling action on injections and on surjections
from a finite set $[n]$ to another finite set $[k]$.  In both cases, we prove that within each orbit $\eso$, the number $\sum\limits_{f\in\eso} \#f^{-1}(i)$ of times $i\in[k]$ appears as an output of a function in $\eso$ is the same for each $i\in[k]$.  We also prove this for a family of functions that naturally generalizes injections and conjecture it for a similar family that generalizes surjections.  Then in Section~\ref{sec:rg}, we explore the whirling action on various families of restricted growth words, which are functions that encode set partitions.

\section{Whirling on \textit{m}-injections and \textit{m}-surjections between finite sets}\label{sec:injsur}
In this section we first define the families of functions, the $m$-injections and $m$-surjections, that we will consider,
and the whirling action on them.  We also define the basic tools (e.g., orbit boards) that we use
to understand the dynamics.  The next few subsections detail the lemmas and proofs of homomesy.
Then we discuss a consequence of the homomesy for orbit sizes of whirling, for which we have
no other proof.  We finish with a lifting of the dynamics to the piecewise-linear setting,
where the homomesy conjecturally appears to continue to hold.

\subsection{The whirling map on \textit{m}-injections and \textit{m}-surjections}
\begin{defn}
A function $f:S\ra T$ is \dfn{$m$-injective} if every element $t\in T$ appears as an output of $f$ \emph{at most} $m$ times, i.e., $\#f^{-1}(t) \leq m$ for every $t\in T$.  An $m$-injective function is also called an \dfn{$m$-injection}. Injections are the same as 1-injections.
\end{defn}

\begin{defn}
A function $f:S\ra T$ is \dfn{$m$-surjective} if every element $t\in T$ appears as an output of $f$ \emph{at least} $m$ times, i.e., $\#f^{-1}(t) \geq m$ for every $t\in T$.  An $m$-surjective function is also called an \dfn{$m$-surjection}. Surjections are the same as 1-surjections.
\end{defn}

We denote the set of all $m$-injective functions from $[n]$ to $[k]$ by $\inj_m(n,k)$ and the set of all $m$-surjective functions from $[n]$ to $[k]$ by $\sur_m(n,k)$.
We write a function $f:[n]\ra[k]$ in one-line notation as $f(1)f(2)\cdots f(n)$.  For example, the function $f(1)=5$, $f(2)=1$, $f(3)=2$, $f(4)=1$ is written $f=5121$.  (All of our examples will have codomain $[k]$ with $k\leq 9$.)  Note that $m$-injections are not injective in general.  They are called ``width-$m$ restricted functions'' by~\cite{Wal}, who discusses their enumeration.

\begin{defn}
\label{def:whirl-index}
For $f \in \calf \subseteq [k]^{[n]}$ we define a map $w_i:\calf\ra\calf$, called \dfn{whirling
at index $i$}, as follows: repeatedly add 1 (mod $k$) to the value of $f(i)$ until we get a
function in $\calf$.  The new function is $w_i(f)$.    
\end{defn}

Note that we represent our residues (mod
$k$) within $[k]$ rather than $\{0,1,2,\dotsc ,k-1\}$.  The outputs of a function in $\calf$ are always to be considered mod $k$, so we will not continue to write ``mod $k$.''

\begin{example}
Let $\calf=\inj_2(6,4)$ and $f=422343$.  To compute $w_3(f)$, we first add 1 to $f(3)=2$ and get the function $423343$, which is not 2-injective since 3 appears as an output three times.  So, we add 1 again to the third position and get $424343$, which is also not 2-injective.  Adding 1 once more gives a 2-injective function $421343 = w_3(f)$.
\end{example}

\begin{remark}
The map $w_i$ depends heavily on the family of functions $\calf$ on which it acts.  For example, \begin{itemize}
\item if $\calf=\inj_3(7,3)$, then $w_1(1221332)=3221332$,
\item if $\calf=\sur_1(7,3)$, then $w_1(1221332)=2221332$,
\item if $\calf=\sur_2(7,3)$, then $w_1(1221332)=1221332$.
\end{itemize}
Thus, whenever we talk about whirling, we must first make clear what $\calf$ is.  In this
section $\calf$ either refers to $\inj_m(n,k)$ or $\sur_m(n,k)$ for some positive integers
$n,k,m$.  In the next section, we will consider a different family of functions called
\emph{restricted growth words}, which are in bijection with
set partitions.
\end{remark}

\begin{defn}
For a family of functions $\calf$ from $[n]$ to $[k]$, we define the (left-to-right) \dfn{whirling} map $\bfw:\calf\ra\calf$ to be the map that whirls at indices $1,2,\dots,n$ in that order.  So $\bfw=w_n\circ\cdots\circ w_2\circ w_1$.
\end{defn}

\begin{ex}
Let $\calf=\inj_1(4,7)$.  Then
\[
2753\stackrel{w_1}{\longmapsto }
4753\stackrel{w_2}{\longmapsto }
4153\stackrel{w_3}{\longmapsto }
4163\stackrel{w_4}{\longmapsto }
4165
\]
so $\bfw(2753)=4165$.
\end{ex}

\begin{remark}
On any family $\calf$, $w_i$ and $\bfw$ are invertible.  Given $f\in\calf$, we get $w_i^{-1}(f)$ by repeatedly subtracting 1 from the value of $f(i)$ until we get a function in $\calf$.
\end{remark}

We now discuss orbits and homomesy for $\bfw$.  Theorem~\ref{thm:whirlmesy} is the main result of this section.  We will prove the different cases of this theorem in the upcoming subsections~\ref{subsec:whirl-inj},~\ref{subsec:whirl-m-inj}, and~\ref{subsec:whirl-surj}.

\begin{defn}
For $j\in[k]$, define $\eta_j(f)=\#f^{-1}(\{j\})$ to be the number of times $j$ appears as an output of the function $f$.
\end{defn}

\begin{thm}\label{thm:whirlmesy}
Fix $\calf$ to be either $\inj_m(n,k)$ or $\sur_1(n,k)$ for positive integers $n,k,m$.  Then under the action of $\bfw$ on $\calf$, $\eta_j$ is $\frac{n}{k}$-mesic for any $j\in[k]$.
\end{thm}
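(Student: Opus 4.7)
The plan is to reduce the theorem to showing that, for every $\bfw$-orbit $\eso\subseteq\calf$, the orbit sum $S_\eso(j):=\sum_{f\in\eso}\eta_j(f)$ is independent of $j\in[k]$.  Since $\sum_{j=1}^{k}\eta_j(f)=n$ for every $f\in\calf$, this common value is forced to be $n|\eso|/k$, yielding the average $n/k$.

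The main tool will be the label-shift bijection $\sigma:\calf\to\calf$ defined by $\sigma(f)(i)=f(i)+1\pmod k$.  This is well-defined because the conditions cutting out $\inj_m(n,k)$ and $\sur_1(n,k)$ depend only on the multiset $\{\eta_1(f),\ldots,\eta_k(f)\}$ and are preserved under relabeling.  The key lemma is that $\sigma$ commutes with every $w_i$, and hence with $\bfw$: the criterion ``lies in $\calf$'' is $\sigma$-invariant, so the number of $+1$ increments performed by $w_i$ is unchanged when $\sigma$ is applied before $w_i$.  Since $\eta_{j+1}(\sigma(f))=\eta_j(f)$, this commutation yields $S_{\sigma(\eso)}(j+1)=S_\eso(j)$, and hence $S_\eso(j)=S_\eso(j+1)$ whenever $\sigma(\eso)=\eso$, giving the desired homomesy immediately on every $\sigma$-invariant orbit.

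The main obstacle is that not every orbit is $\sigma$-invariant—for instance, $\{12,34\}\subset\inj_1(2,4)$ is sent by $\sigma$ to the distinct orbit $\{23,41\}$—so the $\sigma$-argument alone shows only that $S_\eso(j)$ is periodic in $j$ with period dividing the $\sigma$-orbit length of $\eso$.  To handle the remaining orbits I would analyze, for each position $i$, the cyclic sequence $(f^{(t)}(i))_t$ of values that $i$ takes across the orbit: in $\inj_m$ each transition is a jump $v\mapsto v+s\pmod k$ with $s\geq 1$, the skipped labels having multiplicity exactly $m$ at that instant; in $\sur_1$ each transition is either a $+1$ step or a stay, the latter occurring precisely when $i$ is the unique position holding its current value (cf.\ Remark~\ref{rem:add1orsame}).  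Writing $A_i(j)$ for the number of iterations whose $w_i$-step produces value $j$, one has $\sum_i A_i(j)=S_\eso(j)$ in the $\inj_m$ case, with a parallel arrivals-plus-stays decomposition in $\sur_1$, so the task reduces to proving that $\sum_i A_i(j)$ is independent of $j$.  My plan is to build an orbit-level bijection pairing each arrival at $j$ with a later arrival at $j+1$, by tracking which position currently carries the value $j$ through the next few whirlings and matching it to whichever position next receives $j+1$.  The genuinely delicate point is that such tracking can be disrupted—in $\inj_m$ the value may be jumped past $j+1$ when $j+1$ is locally full, while in $\sur_1$ the holder of $j$ may be blocked from moving—so the bijection must pass the ``arrival slot'' between positions, and verifying that this transfer is well-defined and invertible globally is where the bulk of the work lies.
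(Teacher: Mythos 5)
Your first observation --- that the label-shift $\sigma(f)(i)=f(i)+1\pmod k$ is a well-defined bijection on $\calf$ commuting with each $w_i$, and hence with $\bfw$ --- is correct and worth having; it shows $\eta_j$-homomesy on any $\sigma$-invariant orbit, and more generally that the orbit sums satisfy $S_{\sigma(\eso)}(j+1)=S_\eso(j)$.  The paper doesn't use $\sigma$ explicitly as a commuting symmetry, but it does use exactly this relabeling as a WLOG device (``assume $1$ appears at least as often as any other value'').  Your example $\{12,34\}\mapsto\{23,41\}$ in $\inj_1(2,4)$ correctly shows the symmetry doesn't close the argument on its own.

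The genuine gap is everything after that.  For the orbits that are not $\sigma$-invariant, you propose to construct ``an orbit-level bijection pairing each arrival at $j$ with a later arrival at $j+1$'' and you yourself flag that ``verifying that this transfer is well-defined and invertible globally is where the bulk of the work lies.''  That work is exactly the content of the paper's proof, and it is not trivial.  What the paper does is lay out the orbit as a cylindrical $\ell\times n$ board and prove a local structure lemma: for $\inj_1$, each entry $i$ at position $P$ has a \emph{unique} $i{+}1$ among the next $n$ positions and a unique $i{-}1$ among the previous $n$ (Lemma~\ref{lem:below-inj}), which makes the pairing into $[k]$-chunks both well-defined and automatically a bijection.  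For $\inj_m$ uniqueness fails, and the pairing you envision really can get ``stuck'' (there may be no unmatched $i{+}1$ among the next $n$ positions); the paper resolves this with a chunk-reassignment argument that includes a termination proof, which is the most delicate step and is entirely absent from your sketch.  For $\sur_1$ the right structure is different again: there the entries in a column increase by $1$ except at ``red-light'' positions, and one shows the red-light values themselves cycle through $1,\dots,k$.  So while you've correctly diagnosed where the difficulty lies, you haven't produced the key local lemmas that make the pairing actually work, nor the termination argument needed in the $\inj_m$ case --- and without those the proposal is a plan, not a proof.
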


Note that $\sum_j \eta_j(f) = n$ for all $f$.
It is clear that an equivalent statement to Theorem \ref{thm:whirlmesy} is that $\eta_i-\eta_j$ is 0-mesic for any $i,j\in[k]$.  That is, $i$ and $j$ appear as outputs of functions the same number of times across any orbit.  We also conjecture this result for $m$-surjections in general.

\begin{conj}\label{conj:whirlmesy}
Let $\calf=\sur_m(n,k)$ for positive integers $m,n,k$.  Then under the action of $\bfw$ on $\calf$, $\eta_j$ is $\frac{n}{k}$-mesic for any $j\in[k]$.
\end{conj}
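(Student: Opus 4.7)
The plan is to exploit the cyclic symmetry of the codomain via the \emph{rotation} map $\sigma:\sur_m(n,k)\to\sur_m(n,k)$ defined by $(\sigma f)(i):=f(i)+1\pmod{k}$. Because $\eta_j(\sigma f)=\eta_{j-1}(f)$ (indices mod $k$), the map $\sigma$ preserves $m$-surjectivity and cyclically permutes the statistics $\eta_1,\dots,\eta_k$. The first step is to verify that $\sigma$ commutes with each $w_i$, hence with $\bfw$. By Remark~\ref{rem:add1orsame}, $w_i$ either fixes $f$ or adds $1\pmod k$ to $f(i)$, and fixes $f$ precisely when $\eta_{f(i)}(f)=m$. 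Since $\eta_{f(i)+1}(\sigma f)=\eta_{f(i)}(f)$, the map $w_i$ takes the same branch on $f$ and on $\sigma f$, and in the incrementing branch both $w_i(\sigma f)$ and $\sigma(w_i(f))$ take value $f(i)+2$ at position $i$ and $f(j)+1$ elsewhere. Hence $\sigma\bfw=\bfw\sigma$.

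Given this, the argument reduces to showing that every $\bfw$-orbit $\eso$ is closed under $\sigma$. Indeed, if $\sigma(\eso)=\eso$ then
\[
\sum_{f\in\eso}\eta_j(f)=\sum_{f\in\eso}\eta_j(\sigma f)=\sum_{f\in\eso}\eta_{j-1}(f),
\]
so all $k$ sums agree; combined with $\sum_j\eta_j(f)=n$, each sum equals $\frac{n}{k}\#\eso$, proving $\eta_j$ is $\tfrac{n}{k}$-mesic. Equivalently, since $\bfw$ acts as a single cycle on $\eso$, any permutation of $\eso$ commuting with $\bfw$ lies in $\langle\bfw|_\eso\rangle$, so $\sigma$-closure of $\eso$ is the assertion that $\sigma|_{\eso}=\bfw^{\,a}|_{\eso}$ for some integer $a$. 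This holds in every example I have checked (e.g., for the orbit of $11122\in\sur_2(5,2)$, direct computation gives $\bfw^{3}=\sigma$).

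The main obstacle is of course proving this $\sigma$-closure of orbits when $m\geq 2$. The cleanest attack would be induction on $m$, with base $m=1$ supplied by Theorem~\ref{thm:whirlmesy} (refined to record orbit preservation, which is essentially what its proof should give). The inductive step calls for a $\bfw$-equivariant map $\sur_m(n,k)\to\sur_{m-1}(n-k,k)$ that removes one ``canonical'' preimage of each codomain value (for example the maximal-index occurrence of each $j$), so that orbit structure on the target controls orbit structure on the source. Making such a canonical-preimage rule compatible with the position-dependent nature of $\bfw$ is the delicate point: naive choices fail to intertwine whirling. An alternative route is to pin down an explicit integer $a$ realizing $\sigma|_{\eso}=\bfw^{\,a}|_{\eso}$ by tracking a dynamical invariant along the orbit, such as $\sum_i f(i)\pmod k$ together with the multiset of ``saturation patterns'' $(\eta_j(f))_{j\in[k]}$, in the spirit of rowmotion-order computations from the toggle-group literature. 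I expect the absence of a canonical distinguished preimage for $m\geq 2$ to be the essential difficulty, which is why Theorem~\ref{thm:whirlmesy} covers only the case $m=1$.
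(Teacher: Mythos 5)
Note first that this statement is an open conjecture in the paper, not a theorem: the authors prove it only for $m=1$ (Theorem~\ref{thm:whirlmesy}), deduce it for $k=2$ via $\sur_m(n,2)=\inj_{n-m}(n,2)$, observe it trivially for $n=mk$, and otherwise report computer verification for $n\leq 10$. Your observation that the rotation $\sigma$ commutes with each $w_i$ --- and hence with $\bfw$ --- on $\sur_m(n,k)$ is correct and is cleanly verified from Remark~\ref{rem:add1orsame}. But the proposed reduction to $\sigma$-closure of $\bfw$-orbits is a dead end, because $\sigma$-closure is simply false in general even where the conjecture is proved. In $\sur_1(4,2)$, the sets $\{1122, 2111, 2221\}$ and $\{2211, 1222, 1112\}$ are distinct $\bfw$-orbits of length three, and $\sigma$ swaps them rather than preserving either; yet the homomesy holds on each by Theorem~\ref{thm:whirlmesy}. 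The failure is forced whenever $n=mk$: every orbit is a singleton, while $\sigma$ has no fixed points for $k\geq 2$, so $\sigma$ moves every orbit. Your single worked example (the orbit of $11122\in\sur_2(5,2)$, which does satisfy $\sigma=\bfw^3$) is therefore atypical, and the assertion that $\sigma$-closure ``holds in every example'' is incorrect.

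What the commutation $\sigma\bfw=\bfw\sigma$ actually yields is that $\sigma$ permutes the set of $\bfw$-orbits and carries the orbit-sum vector $\bigl(\sum_{f\in\eso}\eta_1(f),\dots,\sum_{f\in\eso}\eta_k(f)\bigr)$ to its cyclic shift on $\sigma(\eso)$. That is a symmetry of the problem, not a solution: it says nothing about whether the entries of this vector agree for one fixed orbit, which is precisely the content of the conjecture. For comparison, the paper's proof of the $m=1$ surjection case (Section~\ref{subsec:whirl-surj}) proceeds by an orbit-board argument that identifies ``red light'' positions and chains them into cycles each containing $1,\dots,k$ once; this uses Remark~\ref{rem:add1orsame} in a column-by-column way rather than through any global codomain symmetry. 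Since that remark still holds for general $m$, seeking an $m$-analogue of the red-light bookkeeping is a more promising line of attack than pressing on the rotation $\sigma$.
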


Conjecture~\ref{conj:whirlmesy} has been verified by a computer for all triples $(m,n,k)$ where $n\leq 10$ (for which there are finitely many relevant triples since $\sur_m(n,k)=\varnothing$ if $mk>n$).

\subsection{Orbit board terminology and notation}
In this subsection, we introduce terminology and notation that applies to whirling on any family of functions $\calf$ from $[n]$ to $[k]$ that we will use on various homomesy proofs. We use a generalization of the tuple boards of~\cite{Had21}.

Let $\eso$ be an orbit under the action of $\bfw$ on $\calf$.  We draw a \dfn{board} for the orbit $\eso$ by placing some $f\in\eso$ on the top row.  The function in row $i+1$ is $\bfw^i(f)$ for $i\in[0,\ell(\eso)-1]$, where $\ell(\eso)$ is the length of $\eso$.  For example, in Figure~\ref{fig:orb-inj}, we show a board for an orbit on $\calf=\inj_1(3,6)$.  From this board, we see that if we let $f=621$, then $\bfw(f)=342$, $\bfw^2(f)=563$, and so on, and that $\bfw^{10}(f)=f$.

Notice that within an orbit board, if $f$ is a given row, then the ``partially whirled element'' $(w_i \circ \cdots \circ w_1)(f)$ is given by the first $i$ numbers on the row below $f$ and the last $n-i$ numbers of $f$.  (We say the row ``below'' the bottom row is the top row, as we consider the orbit board to be cylindrical.)  For instance, in the first two rows of Figure~\ref{fig:orb-inj}, $f=621$, $w_1(f)=321$, and $(w_2\circ w_1)(f)=341$.

\begin{figure}[htb]
\begin{center}
\begin{tabular}{ccc}
{6} & {2} & {1} \\
{3} & {4} & {2} \\
{5} & {6} & {3} \\
{1} & {2} & {4} \\
{3} & {5} & {6} \\
{4} & {1} & {2} \\
{5} & {3} & {4} \\
{6} & {5} & {1} \\
{2} & {6} & {3} \\
{4} & {1} & {5} 
\end{tabular}
\end{center}
\caption{The orbit under the action of $\bfw$ on $\inj_1(3,6)$ containing $f=621$.}
\label{fig:orb-inj}
\end{figure}

We use the term \dfn{reading} the orbit board to refer to this action where we start at a certain position $P$ of the board, and continue to the right until we reach the end of the row, and then go to the leftmost position of the row below and continue.
When we refer to a certain position being $x$ positions ``before'' or ``after'' the position $P$, or say the ``previous'' or ``next'' position, we always mean in the reading order.

\begin{defn}
For a given position $P$ in an orbit board, let $(P,h)$ denote the position $h$ places after $P$ in the reading order (or $-h$ places before $P$ if $h<0$).  Let $(P,[a,b])$ denote the $(b-a+1)$-tuple
$\left( (P,a), (P,a+1), \dots, (P,b-1), (P,b) \right)$.
\end{defn}

\begin{example}\label{ex:position}
Consider the following orbit on $\inj_1(4,5)$.  Let $P$ be the position in the second row and second column, shown below surrounded by a black rectangle.  Then $(P,[1,4])$ consists of the four positions circled in red.  Also $(P,[0,4])$ is $(P,[1,4])$ together with $P$, while $(P,[-1,2])$ is the second row.  As the orbit is cylindrical, the bottom right corner is both $(P,14)$ and $(P,-6)$.  Note that $P$ refers only to the \emph{position}, not the value in that position.  So $P\not=(P,5)$ since they are different positions, even though both contain the value 3.  Similarly, we will never write, e.g., $P=3$.

\begin{center}
\begin{tabular}{cccc}
3 & 2 & 1 & 5\\
4 & \co{black}3 & \redcir{2} & \redcir{1}\\
\redcir{5} & \redcir{4} & 3 & 2\\
1 & 5 & 4 & 3\\
2 & 1 & 5 & 4
\end{tabular}
\end{center}
\end{example}

Note that if $P$ is in row $i$ and column $j$, then $(P,[1,n])$ always consists of all positions to the right of $P$ in row $i$, followed with the leftmost $j$ positions of row $i+1$.  Also note that $P([-n,-1])$ consists of the rightmost $n-j+1$ positions of row $i-1$ followed by all $j-1$ positions left of $P$ in row $i$.

\begin{remark}\label{rem:earlier later}
When describing the entire orbit board, it does not make sense to say that a particular position is ``earlier'' or ``later'' than another position, because the orbit board is cylindrical.  However, when working within a tuple of $r$ consecutive positions, these notions are well-defined.  Suppose we are in $(P,[a,b])$ for some position $P$ and $a,b\in\zz$.
Then to say position $Q$ is earlier than position $R$ (equiv.~$R$ is later than
$Q$) within $(P,[a,b])$ means that $Q=(P,c)$ and $R=(P,d)$ for some
$a\leq c< d \leq b$.
Likewise, it makes sense to refer to the last (or earliest) occurrence of 3 within $(P,[a,b])$, but it doesn't make sense to refer to such within the entire orbit board.
\end{remark}

\subsection{The proof of Theorem~\ref{thm:whirlmesy} for injections}\label{subsec:whirl-inj}
In this subsection, we prove Theorem~\ref{thm:whirlmesy} when $\calf=\inj_1(n,k)$.  Even though this is a special case of $m$-injections, the proof here is much simpler.

\begin{lemma}\label{lem:below-inj}
Suppose $i$ is in position $P$ of a board for the $\bfw$-orbit $\eso$ on $\inj_1(n,k)$.
\begin{enumerate}
\item There is exactly one occurrence of $i+1$ within $(P,[1,n])$.
\item There is exactly one occurrence of $i-1$ within $(P,[-n,-1])$.
\end{enumerate}
\end{lemma}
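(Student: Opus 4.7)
The plan is to exploit the cylindrical structure of the orbit board through the observation that every $n$ consecutive positions in the reading order form an injection. More precisely, for any position $Q$ at row $r$, column $c$, the tuple of values in $(Q,[0,n-1])$ is a cyclic shift of the partially-whirled element $(w_{c-1} \circ \cdots \circ w_1)(\bfw^r(f))$, which lies in $\inj_1(n,k)$ because each $w_i$ preserves this set; since distinctness of values is invariant under cyclic reindexing, every such $n$-window has $n$ distinct values in $[k]$.

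For part (1), let $P$ sit at row $r$, column $c$ with value $i$, set $j=i+1\bmod k$, and apply the window observation to $(P,[1,n])$: this is itself an $n$-window, hence an injection, so $j$ appears in it at most once. I would split into two cases based on whether the value at position $(P,n)$ (the column-$c$ entry of row $r+1$, i.e., the result of whirling column $c$ starting from $i$) equals $j$. If it does, then by injectivity of the window $j$ cannot appear in $(P,[1,n-1])$, so $j$ appears exactly once in $(P,[1,n])$. If it does not, then when the whirling procedure first incremented $i$ to $j$, some other position of the partially-whirled element already held the value $j$; those other positions are precisely the entries of $(P,[1,n-1])$ (up to order), so $j$ appears there, exactly once by injectivity, and since the value at $(P,n)$ is not $j$, we again get exactly one occurrence of $j$ in $(P,[1,n])$.

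For part (2), I would run the parallel argument using $\bfw^{-1}=w_1^{-1}\circ\cdots\circ w_n^{-1}$: after undoing $w_n,\ldots,w_{c+1}$, the intermediate element consists of positions $1,\ldots,c$ of row $r$ followed by positions $c+1,\ldots,n$ of row $r-1$, which up to cyclic shift is the window $(P,[-n,-1])$ and is therefore an injection. Applying $w_c^{-1}$ subtracts $1\bmod k$ from $i$ until we land outside the values in $(P,[-n+1,-1])$, so the same dichotomy with $j'=i-1\bmod k$ produces exactly one occurrence of $j'$ in $(P,[-n,-1])$. The main obstacle I anticipate is purely bookkeeping: aligning the partially-whirled intermediate element (indexed linearly on $[n]$) with the cylindrical windows $(P,[\cdot,\cdot])$ via the correct cyclic shift; once that identification is in place, the case analyses are essentially forced.
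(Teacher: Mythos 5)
Your proof is correct and takes essentially the same approach as the paper: you identify $(P,[1,n])$ (resp.\ $(P,[-n,-1])$) with the set of values of a partially-whirled element in $\inj_1(n,k)$, get existence from the behavior of $w_j$ (resp.\ $w_j^{-1}$), and uniqueness from injectivity of that element. Your case split on whether $(P,n)$ carries the value $i+1\bmod k$ is logically equivalent to the paper's split on whether $(w_{j-1}\circ\cdots\circ w_1)(f)$ already outputs $i+1\bmod k$, so the two proofs differ only in presentation.
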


\begin{proof}
To prove (1), suppose position $P$ is in column $j$.  So $f(j)=i$ for some $f\in\eso$.  Then $(P,[1,n])$ contains the multiset of outputs of $(w_j\circ\cdots\circ w_1)(f)$ in some order.  If $(w_{j-1}\circ\cdots\circ w_1)(f)$ does not have $i+1$ already as an output, then by definition $w_j$ changes the output corresponding to the input $j$ from $i$ to $i+1$.  So there is an occurrence of $i+1$ within $(P,[1,n])$. Since $(w_{j}\circ\cdots\circ w_1)(f)$ cannot have any output more than once, this occurrence is unique.

The proof of (2) is analogous to (1) using the inverse of whirling instead.
\end{proof}

\begin{proof}[of Theorem~\ref{thm:whirlmesy} for $\calf=\inj_1(n,k)$]
By Lemma~\ref{lem:below-inj}, we have a bijection between $i$-entries and $(i+1)$-entries for any $i\in[k-1]$, leading immediately to the equidistribution.
\end{proof}

\subsection{The proof of Theorem~\ref{thm:whirlmesy} for \textit{m}-injections}\label{subsec:whirl-m-inj}

Now we prove Theorem~\ref{thm:whirlmesy} for the case $\calf=\inj_m(n,k)$.  We will partition orbit boards into $[k]$-chunks, where each chunk contains $1,2,\dots,k$ and the instance of $i+1$ within a chunk is at most $n$ positions after the instance of $i$ (in the reading order).  
Note that there is not necessarily a \textit{unique} way of partitioning the orbit into chunks; see Figure~\ref{fig:orb-m-inj} for two different ways to partition the orbit of $\inj_2(4,4)$ containing 1441 into $[4]$-chunks.  However, to prove Theorem~\ref{thm:whirlmesy} we simply need to show the existence of at least one partitioning into $[k]$-chunks.

We will again use the notation $(P,h)$ and $(P,[a,b])$ as we did for the injections proof.  We will always refer to the reading order.

\begin{figure}[hb]
\begin{center}
\begin{minipage}{0.45\textwidth}
\centering
\begin{tabular}{cccc}
\co{blue}{1} & \co{magenta}{4} & \co{chocolate}{4} & \co{cyan}{1} \\
\co{blue}{2} & \co{purple}{1} & \co{cyan}{2} & \co{cyan}{3} \\
\co{blue}{3} & \co{purple}{2} & \co{cyan}{4} & \co{blue}{4} \\
\co{brown}{1} & \co{purple}{3} & \cob{pink}{1} & \co{brown}{2} \\
\cob{pink}{2} & \co{purple}{4} & \co{brown}{3} & \cob{pink}{3} \\
\co{brown}{4} & \co{green}{1} & \cob{pink}{4} & \cob{coral}{1} \\
\cob{coral}{2} & \co{green}{2} & \cob{yellow}{1} & \cob{coral}{3} \\
\co{green}{3} & \cob{coral}{4} & \cob{yellow}{2} & \co{green}{4} \\
\co{darkgray}{1} & \cob{gold}{1} & \cob{yellow}{3} & \co{darkgray}{2} \\
\cob{gold}{2} & \co{darkgray}{3} & \cob{yellow}{4} & \cob{gold}{3} \\
\co{darkgray}{4} & \co{orange}{1} & \co{red}{1} & \cob{gold}{4} \\
\co{orange}{2} & \co{red}{2} & \co{orange}{3} & \co{teal}{1} \\
\co{red}{3} & \co{orange}{4} & \co{red}{4} & \co{teal}{2} \\
\co{black}{1} & \cob{lime}{1} & \cob{lime}{2} & \co{teal}{3} \\
\co{black}{2} & \cob{lime}{3} & \co{teal}{4} & \cob{lime}{4} \\
\co{black}{3} & \co{magenta}{1} & \co{chocolate}{1} & \co{magenta}{2} \\
\co{black}{4} & \co{chocolate}{2} & \co{magenta}{3} & \co{chocolate}{3}
\end{tabular}
\end{minipage}
\begin{minipage}{0.45\textwidth}
\centering
\begin{tabular}{cccc}
\co{blue}{1} & \co{magenta}{4} & \co{green}{4} & \co{red}{1} \\
\co{blue}{2} & \cob{gold}{1} & \co{red}{2} & \co{blue}{3} \\
\co{red}{3} & \cob{gold}{2} & \co{blue}{4} & \co{red}{4} \\
\cob{lime}{1} & \cob{gold}{3} & \cob{yellow}{1} & \cob{yellow}{2} \\
\cob{lime}{2} & \cob{gold}{4} & \cob{yellow}{3} & \cob{lime}{3} \\
\cob{yellow}{4} & \co{black}{1} & \cob{lime}{4} & \co{cyan}{1} \\
\co{black}{2} & \co{cyan}{2} & \co{chocolate}{1} & \co{black}{3} \\
\co{cyan}{3} & \co{black}{4} & \co{chocolate}{2} & \co{cyan}{4} \\
\cob{coral}{1} & \cob{pink}{1} & \co{chocolate}{3} & \cob{coral}{2} \\
\cob{pink}{2} & \cob{coral}{3} & \co{chocolate}{4} & \cob{pink}{3} \\
\cob{coral}{4} & \co{teal}{1} & \co{darkgray}{1} & \cob{pink}{4} \\
\co{teal}{2} & \co{darkgray}{2} & \co{teal}{3} & \co{purple}{1} \\
\co{darkgray}{3} & \co{teal}{4} & \co{darkgray}{4} & \co{purple}{2} \\
\co{orange}{1} & \co{brown}{1} & \co{brown}{2} & \co{purple}{3} \\
\co{orange}{2} & \co{brown}{3} & \co{brown}{4} & \co{purple}{4} \\
\co{orange}{3} & \co{magenta}{1} & \co{green}{1} & \co{green}{2} \\
\co{orange}{4} & \co{magenta}{2} & \co{green}{3} & \co{magenta}{3} 
\end{tabular}
\end{minipage}
\end{center}
\caption{Two different ways to partition the same $\bfw$-orbit of $\inj_2(4,4)$ into $[4]$-chunks.}
\label{fig:orb-m-inj}
\end{figure}

\begin{remark}\label{rem:no more than m}
For any position $P$ in column $j$, $(P,[1,n])$ contains the multiset of outputs of $(w_j\circ\cdots\circ w_1)(f)$ for some $f \in \eso$.  Thus, any set of $n$ consecutive positions cannot contain more than $m$ equal values.
\end{remark}

\begin{lemma}\label{lem:below-m-inj}
Suppose $i$ is the value in position $P$ of a board for the $\bfw$-orbit $\eso$ on $\inj_m(n,k)$.
\begin{enumerate}
\item There are at most $m$ occurrences of $i+1$ within $(P,[1,n])$.
\item If the position $(P,n)$ directly below $P$ does not contain $i+1$, then there are exactly $m$ occurrences of $i+1$ within $(P,[1,n-1])$.
\item There are at most $m$ occurrences of $i-1$ within $(P,[-n,-1])$.
\item If the position $(P,-n)$ directly above $P$ does not contain $i-1$, then there are exactly $m$ occurrences of $i-1$ within $(P,[-(n-1),-1])$.
\end{enumerate}
\end{lemma}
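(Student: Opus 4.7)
The plan is to match each block of $n$ consecutive positions in the orbit board with the output multiset of an appropriate partially-whirled function, in the spirit of Remark~\ref{rem:no more than m}. Let $P$ lie in column $j$ of a row representing $g \in \eso$, so that $g(j) = i$, and set $g' := (w_{j-1}\circ\cdots\circ w_1)(g)$. Since $w_1,\ldots,w_{j-1}$ leave coordinates $\geq j$ fixed, we have $g'(j) = i$ and $g'(\ell) = g(\ell)$ for $\ell \geq j$; dually, since $w_{\ell+1},\ldots,w_n$ leave coordinate $\ell$ fixed for $\ell < j$, we have $g'(\ell) = \bfw(g)(\ell)$ for $\ell < j$. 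I will assume throughout that $k \geq 2$, as $k = 1$ makes the statement vacuous.

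With this setup, parts (1) and (3) are immediate from Remark~\ref{rem:no more than m}: the tuples $(P,[1,n])$ and $(P,[-n,-1])$ each consist of $n$ consecutive positions, so no value (and in particular $i\pm 1 \bmod k$) appears among them more than $m$ times. For part (2), first I would verify that the multiset of values in $(P,[0,n-1])$ coincides with the output multiset of $g'$: the positions $(P,0),\ldots,(P,n-j)$ cover row $r$, columns $j,\ldots,n$, with values $g'(j),\ldots,g'(n)$, while the positions $(P,n-j+1),\ldots,(P,n-1)$ cover row $r+1$, columns $1,\ldots,j-1$, with values $\bfw(g)(\ell)=g'(\ell)$. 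The hypothesis that $(P,n)$ does not contain $i+1 \bmod k$ says exactly that when $w_j$ was applied to $g'$, a single increment of coordinate $j$ (from $i$ to $i+1 \bmod k$) failed to yield an $m$-injection; this forces $g'$ to already contain $m$ copies of $i+1 \bmod k$. Since $g'$ is $m$-injective the count is exactly $m$, and since $P$ itself contains $i \not\equiv i+1 \pmod k$, all $m$ occurrences lie in $(P,[1,n-1])$.

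Part (4) will follow by the symmetric template using $\bfw^{-1} = w_1^{-1}\circ\cdots\circ w_n^{-1}$ in place of $\bfw$: the analogous partially-whirled function now matches the values on the block $(P,[-(n-1),0])$, and ``subtracting $1$ did not suffice'' replaces ``adding $1$ did not suffice,'' forcing $m$ copies of $i-1 \bmod k$. I expect no deeper obstacle than careful bookkeeping of which partially-whirled function is represented by which block straddling two consecutive rows; the only conceptual ingredient beyond Remark~\ref{rem:no more than m} is the observation that $w_j$ must increment by strictly more than one precisely when the target value $i+1 \bmod k$ has already saturated its capacity $m$ in the current state.
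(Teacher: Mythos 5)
Your proposal is correct and follows essentially the same argument as the paper: both prove (1) and (3) directly from Remark~\ref{rem:no more than m}, and both prove (2) by identifying the multiset of values in $(P,[0,n-1])$ with the output multiset of the partially whirled function $(w_{j-1}\circ\cdots\circ w_1)(f)$, then observing that if $(P,n)$ does not contain $i+1\bmod k$, the increment step of $w_j$ must have been blocked because $i+1\bmod k$ already occurs $m$ times; part (4) is the mirror image using $\bfw^{-1}$. Your write-up simply supplies the row/column bookkeeping that the paper leaves implicit.
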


\begin{proof}
Suppose position $P$ is in column $j$.  Then $f(j)=i$ for some $f\in\eso$.  By Remark~\ref{rem:no more than m}, there cannot be more than $m$ occurrences of $i+1$, proving (1).

If $(w_{j-1}\circ\cdots\circ w_1)(f)$ does not have $i+1$ already $m$ times as an output, then by definition $w_j$ changes the output corresponding to the input $j$ from $i$ to $i+1$.  This proves (2).

The proofs of (3) and (4) are analogous to (1) and (2) using the inverse of whirling instead.
\end{proof}

\begin{proof}[of Theorem~\ref{thm:whirlmesy} for $\calf=\inj_m(n,k)$]
Within any orbit $\eso$, we will assume without loss of generality that 1 appears as an output at least as often as any other number $2,\dots,k$.  This is because if $i>1$ were to appear as an output more times than $1$, then we could renumber $i,i+1,\dots,k,1,\dots,i-1$ as $1,2,\dots,k$, and the outputs would remain the same relative to each other (mod $k$).

Choose a 1 in the orbit board and call its position $P_1$.  Then by Lemma~\ref{lem:below-m-inj}(2), there exists at least one occurrence of 2 in $(P_1,[1,n])$; pick a position $P_2$ containing such a 2 and place it in the same chunk as $P_1$.  For every $P_i$ containing the value $i$, we select a position $P_{i+1}$ in $(P,[1,n])$ containing the value $i+1$ and place it in the same chunk as $P_i$. Continue this until the chunk contains $1,2,\dots,k$
(that is, until the $k$ positions that comprise the chunk contain the respective values $1,2,\dots,k$).

To start a new chunk, we choose a 1-entry that is not already part of a chunk, and wish to continue the same process.  However, there is not necessarily a unique occurrence of $i+1$ within the next $n$ positions after an $i$, nor a unique occurrence of $i-1$ within the previous $n$ positions before an $i$.  When placing an $(i+1)$-entry in the same chunk as $i$ in position $P$, we want to choose $i+1$ that is not already part of a chunk.  When $(P,[1,n])$ contains such an $(i+1)$-entry, we choose one of them.  However, such an $(i+1)$-entry may not exist depending on how we chose earlier chunks.

See the left side of Figure~\ref{fig:change-chunks} for an example of this problem.  Let $P$ be the position of the 2 in the purple chunk.  Then the only 3 in $(P,[1,n])$ is already part of the brown chunk.  In this case, we reassign the part of the brown chunk starting with 3 to be in the purple chunk.
So the purple chunk is now complete but the brown chunk is incomplete, having just the values 1 and 2.  We now need to complete the brown chunk.

\begin{figure}[htb]
\begin{center}
\begin{minipage}{0.31\textwidth}
\centering
\begin{tabular}{cccc}
\co{blue}{1} & {4} & {4} & {1} \\
\co{blue}{2} & {1} & {2} & \co{blue}{3} \\
{3} & {2} & \co{blue}{4} & {4} \\
{1} & {3} & {1} & {2} \\
{2} & {4} & {3} & {3} \\
{4} & {1} & {4} & {1} \\
{2} & {2} & {1} & {3} \\
{3} & {4} & {2} & {4} \\
{1} & {1} & {3} & {2} \\
{2} & {3} & {4} & {3} \\
{4} & {1} & \co{darkgray}{1} & {4} \\
{2} & \co{darkgray}{2} & {3} & \co{purple}{1} \\
\co{darkgray}{3} & {4} & \co{darkgray}{4} & \co{purple}{2} \\
\textcircled{1} & \co{brown}{\textcircled{1}} & \co{brown}{\textcircled{2}} & \co{brown}{\textcircled{3}} \\
{2} & {3} & {4} & \co{brown}{4} \\
{3} & {1} & {1} & {2} \\
{4} & {2} & {3} & {3} 
\end{tabular}
\end{minipage}
\begin{minipage}{0.31\textwidth}
\centering
\begin{tabular}{cccc}
\co{blue}{1} & {4} & {4} & {1} \\
\co{blue}{2} & {1} & {2} & \co{blue}{3} \\
{3} & {2} & \co{blue}{4} & {4} \\
{1} & {3} & {1} & {2} \\
{2} & {4} & {3} & {3} \\
{4} & {1} & {4} & {1} \\
{2} & {2} & {1} & {3} \\
{3} & {4} & {2} & {4} \\
{1} & {1} & {3} & {2} \\
{2} & {3} & {4} & {3} \\
{4} & {1} & \co{darkgray}{1} & {4} \\
{2} & \co{darkgray}{2} & {3} & \co{purple}{1} \\
\co{darkgray}{3} & {4} & \co{darkgray}{4} & \co{purple}{2} \\
{1} & \co{brown}{1} & \co{brown}{2} & \co{purple}{3} \\
{2} & {3} & {4} & \co{purple}{4} \\
{3} & {1} & {1} & {2} \\
{4} & {2} & {3} & {3} 
\end{tabular}
\end{minipage}
\begin{minipage}{0.31\textwidth}
\centering
\begin{tabular}{cccc}
\co{blue}{1} & {4} & {4} & {1} \\
\co{blue}{2} & {1} & {2} & \co{blue}{3} \\
{3} & {2} & \co{blue}{4} & {4} \\
{1} & {3} & {1} & {2} \\
{2} & {4} & {3} & {3} \\
{4} & {1} & {4} & {1} \\
{2} & {2} & {1} & {3} \\
{3} & {4} & {2} & {4} \\
{1} & {1} & {3} & {2} \\
{2} & {3} & {4} & {3} \\
{4} & {1} & \co{darkgray}{1} & {4} \\
{2} & \co{darkgray}{2} & {3} & \co{purple}{1} \\
\co{darkgray}{3} & {4} & \co{darkgray}{4} & \co{purple}{2} \\
{1} & \co{brown}{1} & \co{brown}{2} & \co{purple}{3} \\
{2} & \co{brown}{3} & \co{brown}{4} & \co{purple}{4} \\
{3} & {1} & {1} & {2} \\
{4} & {2} & {3} & {3} 
\end{tabular}
\end{minipage}
\end{center}
\caption{In the left diagram, we cannot complete the purple chunk because the only position containing 3 within the next $n=4$ positions after the purple chunk's 2 is in the brown chunk already.  Thus, we have to take the end of the brown chunk starting from the 3, place it in the purple chunk (middle), and then complete the brown chunk (right).}
\label{fig:change-chunks}
\end{figure}

\textbf{ Claim: }Suppose for a given $i$ in position $P$, all $(i+1)$-entries within $(P,[1,n])$ are already part of chunks.  Then at least one of these $(i+1)$-entries is in the same chunk as an $i$-entry in $(P,[1,n-1])$.

To explain why the claim is true, we consider two cases.

\textbf{Case 1: The entry directly below position $P$ is $i+1$.}
Let $Q=(P,n)$ be this position below $P$.  Then the $i+1$ in position $Q$ is already in a chunk with an $i$-entry in $(Q,[-n,-1])=(P,[0,n-1])$.  However, we know position $P$ is not already in a chunk with an $(i+1)$-entry.  So the $(i+1)$-entry in position $Q$ must be in the same chunk as an $i$-entry in $(P,[1,n-1])$.

\textbf{Case 2: The entry directly below position $P$ is not $i+1$.}
Then Lemma~\ref{lem:below-m-inj}(2) implies that there are $m$ occurrences of $i+1$ within $(P,[1,n-1])$.  Consider these $(i+1)$-entries.
Each of them is in a chunk with an $i$ that is at most $n$ positions before it
(and at least one position before).
Thus, all of these corresponding $i$-entries are in $(P,[-(n-1),n-2])$.
If they are all in $(P,[-(n-1),-1])$, then there would be $m$ different $i$-entries in $(P,[-(n-1),-1])$.  Adding in the $i$ in position $P$, this
implies there are $m+1$ different $i$-entries in $(P,[-(n-1),0])$, contradicting Remark~\ref{rem:no more than m}.  So at least one of the occurrences of $i+1$ within $(P,[1,n-1])$ is in a chunk with an $i$ in $(P,[1,n-2])$.  Thus, we have proven the claim.

Suppose that we get stuck in partitioning the orbit board into $[k]$-chunks.  In this case, when creating a chunk $C$ through $\{1,\dots,i\}$, we have $i$ in position $P$ and all $(i+1)$-entries in $(P,[1,n])$ are already part of chunks.  Then we will choose a chunk $C'$ for which both the $i$ and $(i+1)$-entries are within $(P,[1,n])$; such a chunk exists by the claim.  We will take the positions of the $i+1,\dots,k$-entries of $C'$ and make them part of the chunk $C$ instead of $C'$.  Then it is just as if $C$ was created previously, and we now must complete $C'$.  This can cause a chain reaction of chunk reassignment and we will now discuss why the process must end.

At any point, let $Q$ be the position of $i+1$ for which we reassign $i+1,\dots,k$ to be in a different chunk.  Then we are always reassigning the $i+1,\dots,k$-entries
to be in the same chunk as an $i$ that is earlier within $(Q,[-n,-1])$ by the previous paragraph.
Therefore, the $i+1$ in position $Q$ can never later be reassigned to the same chunk as the $i$-entry it was previously with.

Given $i\in[k-1]$, there are only finitely many positions with entry $i$ that can be in the same chunk as any specific position with entry $i+1$.
If chunk reassignments continue to be necessary, then eventually for every pair $i\in[k-1]$ and position $Q$ containing $i+1$, $Q$ will be placed in the same chunk as the \emph{earliest} position with $i$ in $(Q,[-n,-1])$, in which case no more
chunk reassignments are possible.
So the process always terminates but one also needs that reassignment is always possible to conclude that the process stops at a full partition of the board into $[k]$-chunks.
\end{proof}

\subsection{The proof of Theorem~\ref{thm:whirlmesy} for surjections}\label{subsec:whirl-surj}
In this section, we prove Theorem~\ref{thm:whirlmesy} in the case where $\calf=\sur_1(n,k)$.  We again use the notation $(P,h)$ and $(P,[a,b])$ as in the previous sections.

\begin{defn}
Call a position $P$ in an orbit board a \dfn{stoplight} if it has the same value as the position below it.
\end{defn}

This term comes from the analogy with traffic lights.  As we look down a column, we always add 1 to the value, except we have to stop when a position is a stoplight.  The stoplights are circled in Figure~\ref{fig:orb-surj}.

\begin{lemma}\label{lem:stoplight}
Let $P$ be a position in an orbit board of $\sur_1(n,k)$, and let $i$ be the value in that position.
\begin{enumerate}
\item If $(P,[1,n-1])$ does not contain $i$ in any position, then the position $(P,n)$ directly below $P$ contains the value $i$.
\item If $(P,[1,n-1])$ contains the value $i$ in some position, then the position $(P,n)$ directly below $P$ contains the value $i+1$.
\end{enumerate}
\end{lemma}

\begin{proof}
Suppose $P$ is in column $j$ and let $f$ be the function on the row with $P$.  So $f(j)=i$.  Then $(P,[0,n-1])$ consists of the multiset of entries of $(w_{j-1}\circ\cdots\circ w_1)(f)$.

If $(P,[1,n-1])$ does not contain $i$ in any position, then $(w_{j-1}\circ\cdots\circ w_1)(f)$ only contains $i$ as an output once.  So to maintain surjectivity, $((w_j\circ\cdots\circ w_1)(f))(j)=i$, and thus $(P,n)$ contains the value $i$.

On the other hand, if $(P,[1,n-1])$ contains $i$ in any position, then $(w_{j-1}\circ\cdots\circ w_1)(f)$ contains $i$ as an output more than once.  Thus applying $w_j$ changes the output corresponding to $j$ from $i$ to $i+1$, since we still get a surjective function.  In this case, $(P,n)$ contains the value $i+1$.
\end{proof}

\begin{figure}[htb]
\begin{center}
\begin{tabular}{cccccccc}
\bluecir{3} & {1} & {1} & \redcir{1} & {4} & {4} & {2} & \bluecir{4}\\
{3} & {2} & \redcir{2} & {1} & {1} & \bluecir{1} & {3} & {4}\\
{4} & \redcir{3} & {2} & {2} & \bluecir{2} & {1} & \redcir{4} & {1}\\
\redcir{1} & {3} & {3} & \bluecir{3} & {2} & {2} & {4} & \redcir{2}\\
{1} & {4} & \bluecir{4} & {3} & {3} & \redcir{3} & {1} & {2}\\
{2} & \bluecir{1} & {4} & {4} & \redcir{4} & {3} & \bluecir{2} & {3}
\end{tabular}
\end{center}
\caption{An example $\bfw$-orbit of $\sur_1(8,4)$ containing $f=31114424$.  For each stoplight $P$ containing $i$, the last position in $(P,[1,n-1])$ that contains $i+1$ (which is a stoplight by Lemma~\ref{lem:next-red-light}) is shown in the same color.}
\label{fig:orb-surj}
\end{figure}

Note that Lemma~\ref{lem:stoplight} implies that a position is a stoplight if and only if it does not contain the same value as any of the next $n-1$ positions.

\begin{lemma}\label{lem:next-red-light}
Let $P$ be a stoplight that contains $i$.  Then there exists at least one position in $(P,[1,n-1])$ that contains $i+1$.  Let $Q$ be the last position in $(P,[1,n-1])$ that contains $i+1$.  Then\begin{enumerate}
\item $Q$ is a stoplight, and
\item $P$ is the last position containing $i$ within $(Q,[-(n-1),-1])$.
\end{enumerate}
\end{lemma}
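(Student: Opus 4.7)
The plan is to prove part~(2) directly from Lemma~\ref{lem:red light}(1) applied to $P$, and to prove part~(1) by showing that $(Q,[1,n-1])$ contains no $i+1$, which by Lemma~\ref{lem:red light}(1) applied to $Q$ will force $Q$ to be a red light. First note that $Q$ is well-defined: the multiset of values in $(P,[1,n])$ equals the output-multiset of an appropriate partial whirl of the row containing $P$, hence is surjective onto $[k]$, and since $(P,n)$ has value $i$ (because $P$ is a red light), the value $i+1$ must occur somewhere in $(P,[1,n-1])$.

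For part~(2), write $Q=(P,q)$ with $1\le q\le n-1$. Lemma~\ref{lem:red light}(1) applied to $P$ says $(P,[1,n-1])$ contains no $i$. The position $P$ itself lies in $(Q,[-(n-1),-1])$ and has value $i$, while every position of $(Q,[-(n-1),-1])$ that comes after $P$ in the reading order is one of $(P,[1,q-1])\subseteq(P,[1,n-1])$ and hence does not have value $i$. So $P$ is the last occurrence of $i$ within $(Q,[-(n-1),-1])$.

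For part~(1), decompose $(Q,[1,n-1])=(P,[q+1,q+n-1])$ into (a)~$(P,[q+1,n-1])$, (b)~the single position $(P,n)$, and (c)~$(P,[n+1,q+n-1])=(P',[1,q-1])$, where $P'=(P,n)$. Piece (a) contains no $i+1$ by the choice of $Q$ as the \emph{last} $i+1$ in $(P,[1,n-1])$, and piece (b) has value $i$. For piece (c), list the positions of $i+1$ inside $(P,[1,n-1])$ as $P+q_1<\cdots<P+q_a=Q$. For each $s<a$, the position $Q$ lies in $(P+q_s,[1,n-1])$ (since $1\le q_a-q_s\le n-2$) and has value $i+1$, so Lemma~\ref{lem:red light}(2) applied to $P+q_s$ forces $P+q_s$ to be a non-red-light. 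Now descend one row: for any $t\in[1,n-1]$, the entry at $P'+t$ lies in the same column as $P+t$, so its value is either the value at $P+t$ or one more mod $k$. Thus $P'+t$ has value $i+1$ only if either $P+t$ has value $i+1$ and is a red light, or $P+t$ has value $i$ and is not a red light. The second case never occurs since $(P,[1,n-1])$ has no $i$, and the first can only occur at $t=q_a=q$ by the preceding sentence. Hence $(P',[1,q-1])$ contains no $i+1$, and the three pieces together show $(Q,[1,n-1])$ contains no $i+1$, so $Q$ is a red light.

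The main obstacle is piece (c): those positions sit in the row below the row of $P$ and are not directly constrained by the maximality of $Q$ within $(P,[1,n-1])$. What makes the argument work is combining the column-descent rule (values either stay or increment by one from a row to the next) with the fact that $(P,[1,n-1])$ has no $i$'s at all, so that the only way an $i+1$ could appear in $(P',[1,n-1])$ is as a value-preserving descent from a red-light $(i+1)$-entry in $(P,[1,n-1])$; all such candidates other than $Q$ itself are ruled out by observing that $Q$ serves as a later $(i+1)$-witness inside each of their $(n-1)$-step look-ahead windows, preventing them from being red lights.
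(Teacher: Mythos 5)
Your proof is correct and follows essentially the same strategy as the paper's: prove part~(2) from the red-light property of $P$, and prove part~(1) by splitting $(Q,[1,n-1])$ into the tail of $(P,[1,n-1])$ after $Q$, the position $(P,n)$ directly below $P$, and the positions directly below $(P,[1,q-1])$, then ruling out $i{+}1$ in each piece via the column-descent rule and the maximality of $Q$. The paper presents the third piece as ``positions below $R$ for $R$ strictly between $P$ and $Q$'' and folds the proof of (2) into that case analysis, whereas you separate out (2) and pre-identify which $(i{+}1)$-entries in $(P,[1,n-1])$ are red lights before descending, but the underlying reasoning is the same.
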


\begin{proof}
Any set of $n$ consecutive positions $(P,[0,n-1])$  contains the multiset of values of some function
in $\sur_1(n,k)$ and thus $(P,[1,n-1])$ contains a position with entry $i+1$.
To prove~(1), Lemma~\ref{lem:stoplight} assures us that it suffices to prove that $(Q,[1,n-1])$ does not contain $i+1$ in any position.  Every position in $(Q,[1,n-1])$ is either
\begin{itemize}
\item in $(P,[1,n-1])$,
\item the position below $P$ (i.e., $(P,n)$), or
\item the position directly below $R$ for some $R$ (strictly) between $P$ and $Q$.
\end{itemize}
Since $Q$ is the last position in $(P,[1,n-1])$ that contains $i+1$, any position simultaneously in both $(Q,[1,n-1])$ and $(P,[1,n-1])$ cannot contain $i+1$.  The position below $P$ contains $i$ since $P$ is a stoplight.  Let $R$ be a position between $P$ and $Q$.  If the position below $R$ contains $i+1$, then $R$ contains either $i$ or $i+1$.  Since $P$ is a stoplight, no position in $(P,[1,n-1])$ contains $i$, so $R$ cannot contain $i$ (which proves (2)).  If $R$ contains $i+1$, then $R$ is not a stoplight because $Q$ is a position in $(R,[1,n-1])$ with the same value as $R$.  So the position below $R$ cannot contain $i+1$, meaning no positions in $(Q,[1,n-1])$ contain $i+1$.
\end{proof}

\begin{proof}[of Theorem~\ref{thm:whirlmesy} for $\calf=\sur_1(n,k)$]
Within any column in an orbit board, if we skip over the stoplights, then every entry is 1 more than the entry above it. Since the positions in each column that are not stoplights are equidistributed between $1,2,\dots,k$, it suffices to show that the orbit's stoplight entries are equidistributed between $1,2,\dots,k$.

If there are no stoplights in the orbit, then we are done.
If there are stoplights, then Lemma~\ref{lem:next-red-light} describes a bijection between stoplights with entry $i$ and stoplights with entry $i+1$, so the stoplight entries are equidistributed between $1,2,\dots,k$.
\end{proof}

\subsection{Consequences of the homomesy}\label{subsec:consequences}

Let $\calf$ be either $\inj_m(n,k)$ or $\sur_1(n,k)$.  Then given any $j\in[k]$, the number $\eta_j(f)$ of times $j$ appears as an output of the function $f$ is always an integer.  However, the average value of $\eta_j$ across every orbit is not always an integer, which implies a common divisor among all of the orbit sizes.

\begin{cor}\label{cor:orbit-sizes}
Suppose $\calf$ is either $\inj_m(n,k)$ or $\sur_1(n,k)$.  The size of any $\bfw$-orbit of $\calf$ is a multiple of $\frac{k}{\gcd(n,k)}$.
\end{cor}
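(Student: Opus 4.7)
The plan is to deduce this corollary directly from Theorem~\ref{thm:whirlmesy} by exploiting the integrality of $\eta_j$. By that theorem, for any $\bfw$-orbit $\eso \subseteq \calf$ and any $j \in [k]$, the homomesy average $\frac{n}{k}$ of $\eta_j$ gives
\[
\sum_{f \in \eso} \eta_j(f) \;=\; \frac{n \cdot \#\eso}{k}.
\]
Since $\eta_j(f) = \#f^{-1}(\{j\})$ is a nonnegative integer for each $f$, the left-hand side is an integer, so $k$ divides $n \cdot \#\eso$.

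The next step is a standard number-theoretic move: writing $d = \gcd(n,k)$ and $n = dn'$, $k = dk'$ with $\gcd(n',k') = 1$, the divisibility $k \mid n \cdot \#\eso$ becomes $k' \mid n' \cdot \#\eso$, and since $\gcd(n', k') = 1$ this forces $k' = \frac{k}{\gcd(n,k)} \mid \#\eso$, which is precisely what is claimed.

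There is no real obstacle here beyond invoking the main theorem: the corollary is a formal consequence of $\eta_j$ being integer-valued combined with the non-integer average $\frac{n}{k}$ when $k \nmid n$. In the case $k \mid n$, the conclusion $\frac{k}{\gcd(n,k)} = 1 \mid \#\eso$ is trivial, so the statement is unified across both regimes. No additional combinatorial input (e.g.\ from the chunk-based arguments in Sections~\ref{subsec:whirl-inj}--\ref{subsec:whirl-surj}) is needed; those arguments have already done all the work through Theorem~\ref{thm:whirlmesy}.
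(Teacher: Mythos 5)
Your proof is correct and takes essentially the same route as the paper: both deduce the corollary from Theorem~\ref{thm:whirlmesy} by observing that the integer-valued statistic $\eta_j$ has average $n/k$ across an orbit, forcing the orbit size to be a multiple of the reduced denominator $\frac{k}{\gcd(n,k)}$. You simply spell out the number-theoretic step (passing from $k \mid n \cdot \#\eso$ to $\frac{k}{\gcd(n,k)} \mid \#\eso$) in more detail than the paper, which just states the conclusion directly.
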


\begin{proof}
Theorem~\ref{thm:whirlmesy} says that the average value of $\eta_j$ across any $\bfw$-orbit is $\frac{n}{k}$.  When written in lowest terms, the denominator of $\frac{n}{k}$ is $\frac{k}{\gcd(n,k)}$.  Thus, the size of any orbit must be a multiple of $\frac{k}{\gcd(n,k)}$ in order for the average value of $\eta_j$ to be $\frac{n}{k}$.
\end{proof}

Corollary~\ref{cor:orbit-sizes} gives an example where we can use the homomesy to prove a result that neither mentions homomesy nor the statistic that is homomesic.  As the sizes of the orbits are generally unpredictable, this is the only known way to prove this.  This is one of several instances where homomesy has been used to determine divisibility properties of a map; see ~\cite[Corollary 4.8]{EFG+16} for another.

It is straightforward to see that Theorem~\ref{thm:whirlmesy} (and therefore Corollary~\ref{cor:orbit-sizes} also) extends to the case where we replace $\bfw$ by any product of the $w_i$ maps, each used exactly once, in some order.
Let $\pi$ be a permutation on $[n]$, $\bfw_\pi:=w_{\pi(n)}\circ\cdots\circ w_{\pi(2)}\circ w_{\pi(1)}$, and $\calf$ be either $\inj_m(n,k)$ or $\sur_1(n,k)$.  For any $\bfw_\pi$-orbit $\eso=(f_1,f_2,\dots,f_\ell)$, there exists a corresponding $\bfw$-orbit $\eso'=(f_1',f_2',\dots,f_\ell')$ such that $f_i'=f_i\circ\pi$ for all $i$.  Since $\eta_j(f\circ\pi)=\eta_j(f)$, Theorem~\ref{thm:whirlmesy} implies $\eta_j$ is also $\frac{n}{k}$-mesic on orbits of $\bfw_\pi$.  

Functions from a set $E$ to $[2]$ are in a natural correspondence with subsets of $E$ via indicator functions.  In this case, whirling at index $i$ is equivalent to \emph{toggling} at index $i$.  Toggle groups were first introduced by~\cite{CF95} in the specific setting of order ideals of a poset, and generalized more recently by~\cite{Str18} via the definition below.

\begin{defn}[\cite{Str18}]
Let $E$ be a set and $\call\subseteq 2^E$ a set of ``allowed'' subsets of $E$.  Then to every $e\in E$, we define its \dfn{toggle} $t_e:\call\ra\call$ as 
\[t_e(X)=\left\{\begin{array}{ll}
X\cup\{e\} &\text{if $e\not\in X$ and $X\cup\{e\}\in\call$,}\\
X\sm\{e\} &\text{if $e\in X$ and $X\sm\{e\}\in\call$,}\\
X &\text{otherwise.}
\end{array}\right.\]
Each toggle $t_e$ is a permutation on $\call$.  The \dfn{toggle group} is the subgroup of the symmetric group $\ss_\call$ on $\call$ generated by $\{t_e:e\in E\}$.
\end{defn}

Let $n>0$, $0\leq r\leq n/2$ be integers.
For the specific case $\calf=\inj_{n-r}(n,2)$, we can restate our homomesy result in terms of toggle groups.  Note that for $f\in\calf$, the corresponding subset $\{i\in[n] : f(i)=1\}$ has cardinality $\eta_1(f)$.
The $(n-r)$-injective property when $k=2$ is equivalent to $r\leq \eta_1(f) \leq n-r$.

Rephrasing whirling in terms of toggling, the ground set here $E=[n]$ and our set of allowed subsets is $\call_r(n):=\{X\subseteq [n] : r\leq \#X \leq n-r\}$.  Then for each $e\in[n]$, the toggle $t_e:\call_r(n)\ra\call_r(n)$ is
\[t_e(X)=\left\{\begin{array}{ll}
X\cup\{e\} &\text{if $e\not\in X$ and $\#X< n-r$,}\\
X\sm\{e\} &\text{if $e\in X$ and $\#X> r$,}\\
X &\text{otherwise.}
\end{array}\right.\]  This yields the following corollary.

\begin{cor}\label{cor:togglemesy}
Let $\pi$ be a permutation on $[n]$ and $T_\pi:=t_{\pi(n)}\circ\cdots\circ t_{\pi(2)}\circ t_{\pi(1)}$.  Then under the action of $T_\pi$ on $\call_r(n)$, the cardinality statistic is $n/2$-mesic.
\end{cor}

Corollary~\ref{cor:togglemesy} is one of many homomesic statistics in toggle group actions defined as a product of every toggle each used exactly once.  Other such results include rowmotion and promotion on order ideals and antichains of various posets, as well as actions on independent sets of graphs and on noncrossing partitions (including the Kreweras complement), see~\cite{PR15,Rob16,Had21,EFG+16,JR18,ELM+24}.
\subsection{Lifting whirling to the piecewise-linear realm}

We also conjecture a generalization of Corollary~\ref{cor:togglemesy} to the piecewise-linear realm.  Every set $S\in \call_r(n)$ is naturally associated to an indicator vector $(v_1,v_2,\ldots,v_n)\in\{0,1\}^n$ where
$\bfv_i=1$ if $i\in S$ and $\bfv_i=0$ if $i\not\in S$.  For example, the subset $\{2,5,6\}$ of $[7]$ is associated with the vector $(0,1,0,0,1,1,0)$.
Thus, we can think of $\call_r(n)$ as the
set $\{(v_1,v_2,\ldots,v_n)\in\{0,1\}^n : r \leq v_1+v_2+\cdots+v_n\leq n-r\}$.  To get the piecewise-linear version, we now allow the entries of the vector to be real numbers in the interval $[0,1]$.
\begin{defn}
    Let $n>0$ be an integer and $0\leq r \leq \frac{n}{2}$ be a real number.
    Define the polytope
    \[\calp_r(n):=\left\{(v_1,v_2,\ldots,v_n)\in[0,1]^n : r\leq v_1+v_2+\cdots+v_n\leq n-r\right\}.\]
\end{defn}
\begin{defn}
    Let $1\leq i\leq n$.  We define the \dfn{toggle} $t_i:\calp_r(n) \ra \calp_r(n)$ as follows.  Given a vector $\bfv=(v_1,v_2,\ldots,v_n)\in\calp_r(n)$, let
    \begin{align*}
        L &=\max\{0, r-v_1-v_2-\cdots-v_{i-1}-v_{i+1}\cdots-v_n\},\\
        R &=\min\{1, n-r-v_1-v_2-\cdots-v_{i-1}-v_{i+1}\cdots-v_n\}
    \end{align*}
    be the minimum and maximum values that we could change the value of $v_i$ to and still obtain a vector in $\calp_r(n)$.  Then
    \[t_i(\bfv)=(v_1,v_2,\ldots,v_{i-1},L+R-v_i,v_{i+1},\ldots,v_n).\]
\end{defn}

\begin{example}
    Let $n=7, r=2$, and consider the vector
    $(0.9, 0.6, 0.5, 0, 0, 1, 0.4)\in\calp_2(7)$.
    We will apply the toggles $T=t_7\circ \cdots t_2 \circ t_1$ in a left-to-right order.
    To be in $\calp_2(7)$, the sum of the entries must be between 2 and 5.
    We first toggle $t_1$.  The first entry is $0.9$, and this entry can be anything from 0 to 1 and still yield of vector in $\calp_2(7)$.  So $t_1$ changes the first entry to $0+1-0.9=0.1$.
    Now we have $(0.1,0.6,0.5,0,0,1,0.4)$.
    To do the toggle $t_2$, the second entry can be anything from 0 to 1 and still be in $\calp_2(7)$, so we change it from $0.6$ to $0+1-0.6=0.4$, and we now have $(0.1,0.4,0.5,0,0,1,0.4)$.
    We now do the toggle $t_3$.  The third entry is $0.5$, but the sum of the \emph{other} entries is $1.9$, so the third entry must lie in the interval $[0.1,1]$.  Thus $t_3$ changes the third entry to $0.1+1-0.5=0.6$.  Applying $t_4,t_5,t_6,t_7$, we get that
    \[(0.9,0.6,0.5,0,0,1,0.4){\stackrel{T}{\longmapsto}}(0.1,0.4,0.6,1,1,0,0.6).\]
    Applying $T$ repeatedly, we get an orbit of length 1808, and the average sum of the entries is $7/2$ for vectors in the orbit.
\end{example}

We have tested several random orbits and we conjecture that the homomesy in Corollary~\ref{cor:togglemesy} extends
to $\calp_r(n)$.  There is one caveat though.  A vector with irrational entries probably does not always yield a finite orbit.
Thus, we refer to the notion of \dfn{asympotic homomesy}, discussed in~\cite{Vor18,Jos19,DL24}, and which is clearly equivalent to the previous definition of homomesy when all of the orbits are finite.

\begin{defn} {\cite[Definition 5.3.1]{Vor18}}.
Suppose we have a set $\cals$, a map $w:\cals\ra \cals$, and a function (statistic) $f:\cals\ra\rr$.  If there exists $c\in\rr$ such that for every $x\in \cals$,
\[
\lim\limits_{N\ra\infty} \frac{1}{N}\sum\limits_{i=0}^{N-1}f\big(w^i(x)\big)=c,
\]
then we say that $f$ is \dfn{homomesic with average c} (or \dfn{c-mesic}) under the action of $w$ on $\cals$.
\end{defn}

\begin{conj}\label{conj:PL}
    Let $n>0$ be an integer and $0\leq r \leq \frac{n}{2}$ be a real number.
    Let $\pi$ be a permutation on $[n]$ and $T_\pi:=t_{\pi(n)}\circ\cdots\circ t_{\pi(2)}\circ t_{\pi(1)}$.  Then under the action of $T_\pi$ on $\calp_r(n)$, the cardinality statistic is homomesic with average $n/2$.
\end{conj}

It is not uncommon for homomesies in the combinatorial realm to lift to the piecewise-linear realm; see~\cite{EP21,Jos19,HJ22,Hop22}.
However, as far as we know, the only homomesies shown to lift to the piecewise-linear realm are on the order polytope or chain polytope of a poset.  So Conjecture~\ref{conj:PL} is interesting as it would be an example outside of a poset.

\section{Whirling on restricted growth words}\label{sec:rg}
\subsection{Basic definitions, properties, and main result}

A \dfn{partition} of the set $[n]$ is an unordered collection of disjoint subsets of $[n]$, called \dfn{blocks}, whose union is $[n]$.  For example, $\{\{1,2,5\},\{4,7\},\{3,6\}\}$ is a partition of $[7]$.  We often write a partition by writing each block without set braces or commas, and using a bar to separate blocks.  For example, $\{\{1,2,5\},\{4,7\},\{3,6\}\}$ would be written $125|47|36$.  Notice that $125|47|36=125|36|47=63|152|47$ because the order of the blocks is unimportant as well as the order of the numbers within each block.

Another way of encoding a set partition is using its restricted growth word.

\begin{defn}
Let $\pi$ be a partition of $[n]$ with $k$ blocks.  First order the blocks according to their least elements.  The \dfn{restricted growth word} (or \dfn{RG-word}) according to $\pi$ is the function $f:[n]\ra[k]$ where $f(i)=j$ if $i$ is in the $j^\text{th}$ block of $\pi$.
\end{defn}

\begin{example}
For the partition $125|46|37$ of $[7]$, we reorder the blocks as $125|37|46$.  Then its RG-word is $1123132$.
\end{example}

Let $\rg(n,k)$ denote the set of RG-words corresponding to partitions of $[n]$ with exactly $k$ blocks and $\rg(n)$ the set of RG-words corresponding to all partitions of $[n]$ (without specifying the number of blocks).

Let $\stir{n}{k}$ denote the number of partitions of $[n]$ with exactly $k$ blocks and $B_n$ denote the total number of partitions of $[n]$.  These are called the Stirling numbers of the second kind and Bell numbers respectively; e.g., see~\cite[\S1.9]{Sta11}.  RG-words were first introduced by~\cite{Hut63} and have been studied more recently by~\cite{CR17} for their connections to the $q$ and ``$q$-$(1+q)$'' analogues for Stirling numbers of the second kind.

The following proposition is clear from the way RG-words are defined, since the blocks are ordered via least elements.

\begin{prop}\label{prop:rg-criteria}\hspace{-3 in}{\tiny .}

\begin{itemize}
\item A function $f:[n]\ra[k]$ is in $\rg(n,k)$ if and only if it is surjective and for all $1\leq j\leq k-1$, $\min f^{-1}(j) < \min f^{-1}(j+1)$.
\item A function $f:[n]\ra[n]$ is in $\rg(n)$ if and only if for any $1\leq j\leq n-1$,
\begin{itemize}
\item if $j+1$ is in the range of $f$, then so is $j$, and in this case,
\item $\min f^{-1}(j) < \min f^{-1}(j+1)$.
\end{itemize}
\end{itemize}
\end{prop}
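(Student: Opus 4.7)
The plan is to prove both parts directly from the definition of the RG-word construction, by verifying that the stated conditions precisely capture ``$f$ arises from ordering the blocks of some set partition by least element.''

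For the first bullet, I would argue both directions. For the forward direction, suppose $f \in \rg(n,k)$, so $f$ is the RG-word of some partition $\pi$ of $[n]$ into $k$ blocks $B_1, B_2, \ldots, B_k$ ordered by least element. Surjectivity is immediate: each block $B_j$ is nonempty (as it is a block of a partition), so there is some $i$ with $f(i) = j$. For the minimum condition, observe that by definition $\min\{i \mid f(i) = j\}$ is exactly $\min B_j$, and the ordering convention gives $\min B_1 < \min B_2 < \cdots < \min B_k$. For the reverse direction, given a surjective $f : [n] \to [k]$ satisfying the minimum condition, define $B_j := f^{-1}(\{j\})$ for $j \in [k]$. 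These are nonempty (by surjectivity), pairwise disjoint, and their union is $[n]$, so $\pi := \{B_1, \ldots, B_k\}$ is a partition of $[n]$ into $k$ blocks. The minimum condition guarantees that when the blocks of $\pi$ are ordered by least element, the ordering is $B_1, B_2, \ldots, B_k$, so the RG-word of $\pi$ is precisely $f$.

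For the second bullet, I would reduce to the first bullet. Suppose $f \in \rg(n)$; then $f$ is the RG-word of some partition with, say, $k$ blocks, so the range of $f$ is exactly $[k]$. In particular, whenever $j+1$ lies in the range, so does $j$ (as the range is an initial segment of $\pp$), and the minimum condition follows from the first bullet. Conversely, suppose $f : [n] \to [n]$ satisfies the two conditions of the second bullet. The first condition forces the range of $f$ to be an initial segment $[k]$ for some $k \leq n$ (take $k$ to be the maximum entry of $f$; then every integer from $1$ to $k$ is in the range by downward induction using the first condition). Viewing $f$ as a surjection onto $[k]$, the second condition is exactly the hypothesis of the first bullet, so $f \in \rg(n,k) \subseteq \rg(n)$.

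There is no real obstacle here; the statement is essentially a translation of the definition of RG-word into a pointwise criterion on $f$. The only place that requires mild care is ensuring in the second bullet that the range of $f$ is truly an initial segment of $\pp$, which is where the ``if $j+1$ is in the range then so is $j$'' condition is used. Everything else is a direct unpacking of what it means to order the blocks of a partition by their least elements.
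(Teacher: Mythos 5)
Your proof is correct. The paper itself gives no proof of this proposition, merely noting it is ``clear from the way RG-words are defined, since the blocks are ordered via least elements''; your write-up simply supplies the routine unpacking of the definition that the paper treats as obvious, and does so accurately (including the necessary downward-induction observation in the second bullet that the range is an initial segment).
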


Note that we consider the codomain of functions in $\rg(n,k)$ to be $[k]$ and  of $\rg(n)$ to be $[n]$.  Therefore for $\calf=\rg(n,k)$, $w_i$ adds $1\bmod k$ repeatedly to the value $f(i)$ until we get a function in $\calf$, but for $\calf=\rg(n)$, $w_i$ adds $1\bmod n$ repeatedly instead.

\begin{prop}\label{prop:whirl-rg}
Let $\calf=\rg(n,k)$, $f\in \calf$, and $i\in[n]$.  Let $f(i)=j$.  Then $w_i$ changes the output at $i$ in the following way.

\begin{itemize}
\item If $j\not=k$, then
\[(w_i(f))(i)=\left\{\begin{array}{ll}
j+1 &\text{if there exists $i'<i$ such that $f(i')=j$,}\\
j &\text{if the only $i'<\min f^{-1}(j+1)$ s.t. $f(i')=j$ is $i'=i$,}\\
1 &\text{otherwise.}
\end{array}\right.\]
\item If $j=k$, then 
\[(w_i(f))(i)=\left\{\begin{array}{ll}
k &\text{if the only value $i'$ for which $f(i')=k$ is $i'=i$,}\\
1 &\text{otherwise.}
\end{array}\right.\]
\end{itemize}
\end{prop}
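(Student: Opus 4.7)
The plan is to analyze, for each candidate value $v\in[k]$, whether the function $f'_v$ that agrees with $f$ off $\{i\}$ and sends $i$ to $v$ lies in $\rg(n,k)$. By Definition~\ref{def:whirl-index}, $(w_i(f))(i)$ is the first $v$ encountered in the cyclic sequence $j+1, j+2, \ldots, k, 1, 2, \ldots, j$ (where $j := f(i)$) for which $f'_v$ is an RG-word; since $f$ itself is one, the process certainly terminates. Passing from $f$ to $f'_v$ only removes $i$ from $f^{-1}(j)$ and adds $i$ to $f^{-1}(v)$, so by Proposition~\ref{prop:rg-criteria} we need only monitor (a) surjectivity, which fails precisely when $|f^{-1}(j)|=1$ and $v\neq j$, and (b) the inequalities $\min f'^{-1}_v(\ell-1)\leq \min f'^{-1}_v(\ell)$ at the (at most two) disturbed indices, since only $\min f^{-1}(j)$ can increase (to the second-smallest element of $f^{-1}(j)$, and only when $i=\min f^{-1}(j)$) and only $\min f^{-1}(v)$ can decrease (to $\min(i,\min f^{-1}(v))$).

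Next I handle the case $j\neq k$. In sub-case 1a, the witness $i'<i$ with $f(i')=j$ ensures $i\neq\min f^{-1}(j)$, so no minimum involving $j$ changes; one then checks immediately that $v=j+1$ satisfies the RG inequalities at $(j,j+1)$ and $(j+1,j+2)$, so $(w_i(f))(i)=j+1$. In sub-case 1b I show every $v\neq j$ fails: if $|f^{-1}(j)|=1$ surjectivity breaks for all $v\neq j$; otherwise the hypothesis forces the new $\min f^{-1}(j)$ to lie at or beyond $\min f^{-1}(j+1)$, so for $v=j+1$ the new $\min f^{-1}(j+1)$ drops to $i$ and the inequality $\min f^{-1}(j)\leq \min f^{-1}(j+1)$ fails, while for $v\neq j,j+1$ the old $\min f^{-1}(j+1)$ is retained but is still exceeded by the new $\min f^{-1}(j)$. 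Thus we wrap all the way back to $v=j$. In sub-case 1c, pick $i'$ with $i<i'<\min f^{-1}(j+1)$ and $f(i')=j$. For $v\in\{j+1,\ldots,k\}$ we have $i<\min f^{-1}(j+1)\le\min f^{-1}(v)$ by the RG property of $f$, so the new $\min f^{-1}(v)$ drops to $i$ while $\min f^{-1}(v-1)\geq \min f^{-1}(j+1)>i$ is unchanged, violating RG. For $v=1$, the element $i'$ ensures $\min f'^{-1}_1(j)\leq i'<\min f^{-1}(j+1)$, and $\min f'^{-1}_1(1)=1$, so RG is preserved and $(w_i(f))(i)=1$.

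Finally, when $j=k$, incrementing $\bmod\,k$ sends us first to $v=1$. In sub-case 2a, $|f^{-1}(k)|=1$ forces surjectivity to break for every $v\neq k$, so $(w_i(f))(i)=k$. In sub-case 2b, $|f^{-1}(k)|\geq 2$ keeps $k$ in the image, and one verifies that the inequalities at $(k-1,k)$ and $(1,2)$ survive when $v=1$, giving $(w_i(f))(i)=1$. The main obstacle is sub-case 1b: one has to rule out \emph{every} $v\neq j$, which requires splitting on $|f^{-1}(j)|$ to identify the correct failure mode in each regime (surjectivity when $|f^{-1}(j)|=1$, and the RG inequality between $j$ and $j+1$ when $|f^{-1}(j)|\geq 2$). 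The subtlety there is that even for $v$ not adjacent to $j$ the inequality fails, because the new minimum of $f^{-1}(j)$ already exceeds the unchanged $\min f^{-1}(j+1)$.
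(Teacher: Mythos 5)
Your proposal is correct and takes essentially the same approach as the paper: both work directly from Definition~\ref{def:whirl-index}, checking for each candidate value $v$ in the cyclic order $j+1,\dots,k,1,\dots,j$ whether the modified function satisfies the RG-word criterion of Proposition~\ref{prop:rg-criteria}. Your version is slightly more explicit than the paper's (you separate out the ``otherwise'' case of 1b and address the surjectivity failure and the specific RG inequality $\min f^{-1}(j)\leq\min f^{-1}(j+1)$ that breaks for each $v\neq j$), but the decomposition and the underlying argument match; one small nit is that in case 1b the new $\min f^{-1}(j)$ lies \emph{strictly} beyond $\min f^{-1}(j+1)$ (since those preimages are disjoint), which is what makes the violation immediate.
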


Refer to Example~\ref{ex:RG1} for an example of Proposition~\ref{prop:whirl-rg}.

\begin{proof}
\textbf{Case 1: $f(i)\not=k$.} We have two subcases to consider.\\
\textbf{Case 1a: There exists $i'<i$ such that $f(i')=j$.} Then changing the value of $f(i)$ will not change the RG-word criterion $\min f^{-1}(j) < \min f^{-1}(j+1)$ from Proposition~\ref{prop:rg-criteria}.  So $(w_i(f))(i)=j+1$.\\
\textbf{Case 1b: There does not exist $i'<i$ such that $f(i')=j$.} Then we cannot change the value of $f(i)$ to anything larger than $j$ and still have an RG-word.  If the only $i'<\min f^{-1}(j+1)$ such that $f(i')=j$ is $i'=i$, then changing the value of $f(i)$ to something other than $j$ will violate $\min f^{-1}(j) < \min f^{-1}(j+1)$ and not be an RG-word.  Otherwise, changing the value of $f(i)$ to 1 results in an RG-word.\\
\textbf{Case 2: $f(i)=k$.} Since an RG-word must be surjective, if there does not exist $i'\not=i$ satisfying $f(i')=k$, then we cannot change the value of $f(i)$ to something other than $k$.  So $(w_i(f))(i)=k$.  Otherwise, changing the value of $f(i)$ to 1 still results in an RG-word, and so $(w_i(f))(i)=1$.
\end{proof}

For the $\calf=\rg(n)$ case, we have the following slightly different characterization of $w_i$.  We omit the proof as it is straightforward and similar to the proof of Proposition~\ref{prop:whirl-rg}.

\begin{prop}\label{prop:whirl-rg-n}
Let $\calf=\rg(n)$, $f\in \calf$, and $i\in[n]$.  Let $f(i)=j$.  Then 
\[(w_i(f))(i)=\left\{\begin{array}{ll}
j+1 &\text{if there exists $i'<i$ such that $f(i')=j$,}\\
j &\text{if $f^{-1}(j+1)\not=\varnothing$ and the only $i'<\min f^{-1}(j+1)$ s.t. $f(i')=j$ is $i'=i$,}\\
1 &\text{otherwise.}
\end{array}\right.\]
\end{prop}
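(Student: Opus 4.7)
The plan is to run the same case analysis used in the proof of Proposition~\ref{prop:whirl-rg}, but with the RG-word condition taken from the $\rg(n)$ half of Proposition~\ref{prop:rg-criteria} and remembering that $w_i$ cycles modulo $n$ rather than modulo the number of blocks. Throughout I will write $R_h(v) := \min\{\ell \mid h(\ell) = v\}$ (with $R_h(v) := \infty$ when $v$ is not in the range of $h$), let $K$ denote the maximum value in the range of $f$, and for each candidate new value $v$ let $g$ denote the function that agrees with $f$ off position $i$ and satisfies $g(i) = v$. Each sub-case then reduces to checking whether $g \in \rg(n)$ against Proposition~\ref{prop:rg-criteria}.

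For case 1 (some $i' < i$ satisfies $f(i') = j$) I would take $v = j+1$: then $R_g(j) = R_f(j) \leq i' < i$ and $R_g(j+1) = \min(R_f(j+1), i) \leq i$, so $R_g(j) \leq R_g(j+1)$; the other first-occurrence positions are unchanged, and the range of $g$ is either that of $f$ or an enlargement by $\{K+1\}$, each of which is consecutive. Thus $g \in \rg(n)$ and $w_i(f)(i) = j+1$. The putative edge case $j = n$ is vacuous, since having $n$ in the range of some $f \in \rg(n)$ forces $f$ to be the identity, whose only $n$ sits at position $n$.

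For case 2 (position $i$ is the unique $i' < R_f(j+1)$ with $f(i') = j$, necessarily with $j+1$ in the range of $f$ by the parenthetical in the statement) I claim no $v \in [n] \setminus \{j\}$ yields $g \in \rg(n)$, so the whirl cycles through all $n$ values and returns to $f$. The key observation is that every other occurrence of $j$ in $f$ lies at a position $\geq R_f(j+1) > i$, hence $R_g(j) \geq R_f(j+1)$ (or is $\infty$). A brief sub-case analysis on $v$ in each of $\{j+1\}$, $\{j+2, \dots, K\}$, $\{K+1, \dots, n\}$, and $\{1, \dots, j-1\}$ shows that each choice either opens a gap in the range of $g$ or violates $R_g(m) \leq R_g(m+1)$ for some $m$. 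For case 3 the same sub-case analysis rules out every $v \in \{j+1, \dots, n\}$, and I would confirm that $v = 1$ works: $R_g(1) = 1$ because $f(1) = 1$, the first occurrences of $2, \dots, j-1$ are inherited from $f$, and any remaining occurrence of $j$ in $g$ lies strictly before $R_f(j+1) = R_g(j+1)$ (precisely because the case 2 hypothesis fails), or else $j$ leaves the range of $g$ and the range becomes the consecutive prefix $[j-1]$.

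The main obstacle is the bookkeeping in case 2: one must confirm that \emph{every} value of $v$ in $[n] \setminus \{j\}$ fails, and the sub-cases (inside versus outside the range of $f$; $v < j$ versus $v > j$) each demand a separate check. Particular care is required when $j+1$ is not in the range of $f$ (so $R_f(j+1) = \infty$ and the parenthetical note in the statement shunts us into case 3) and when $i$ is the only occurrence of $j$ in $f$; these introduce no new ideas but require careful application of Proposition~\ref{prop:rg-criteria}.
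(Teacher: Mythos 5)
Your proposal is correct and takes essentially the same route as the paper's proof: a direct case analysis on the three clauses of the proposition, checking membership in $\rg(n)$ against Proposition~\ref{prop:rg-criteria} for each candidate new value of $f(i)$. The paper's argument is considerably terser (it asserts without elaboration that $f(i)$ cannot be raised above $j$ when $i$ is the first occurrence of $j$, and that setting $f(i)$ to $1$ works in the third case), whereas you make the first-occurrence bookkeeping and the range-consecutivity check explicit and flag the $j=n$ edge case; these details are all correct and the sub-cases you sketch in case 2 do go through as you describe.
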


\begin{ex}\label{ex:RG1}
For $\calf=\rg(7,4)$,
\[
1213341\stackrel{w_1}{\mapsto}
1213341\stackrel{w_2}{\mapsto}
1213341\stackrel{w_3}{\mapsto}
1223341\stackrel{w_4}{\mapsto}
1221341\stackrel{w_5}{\mapsto}
1221341\stackrel{w_6}{\mapsto}
1221341\stackrel{w_7}{\mapsto}
1221342
\]
so $\bfw(1213341)=1221342$.

On the other hand, for $\calf=\rg(7)$,
\[
1213341\stackrel{w_1}{\mapsto}
1213341\stackrel{w_2}{\mapsto}
1213341\stackrel{w_3}{\mapsto}
1223341\stackrel{w_4}{\mapsto}
1221341\stackrel{w_5}{\mapsto}
1221341\stackrel{w_6}{\mapsto}
1221311\stackrel{w_7}{\mapsto}
1221312
\]
so $\bfw(1213341)=1221312$.
\end{ex}

For $\calf=\rg(n,k)$ or $\calf=\rg(n)$, $w_1$ acts trivially since any RG-word $f$ satisfies $f(1)=1$.  Thus, $\bfw=w_n\circ\cdots w_3\circ w_2$ on these families of functions.  So when we consider generalized toggle orders we define $\bfw_\pi=w_{\pi(n)}\circ\cdots\circ w_{\pi(3)}\circ w_{\pi(2)}$ where $\pi$ is a permutation of $\{2,3,\dots,n\}$.

\begin{defn}
Let \[I_{i\mapsto 1}(f)=\left\{\begin{array}{ll}
1 &\text{if }f(i)=1,\\
0 &\text{if }f(i)\not=1.
\end{array}\right.\]
\end{defn}

The main homomesy result is the following.

\begin{thm}\label{thm:whirl-rg}
Let $n\geq 2$.  Fix $\calf$ to be either $\rg(n)$ or $\rg(n,k)$ for some $1\leq k\leq n$.  Let $\pi$ be a permutation of $\{2,3,\dots,n\}$.  Under the action of $\bfw_\pi$ on $\calf$, $I_{i\mapsto 1}-I_{j\mapsto 1}$ is $0$-mesic for any $i,j\in\{2,3,\dots,n\}$.
\end{thm}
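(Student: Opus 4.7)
The key observation is that, for $i \geq 2$, every $1$ appearing in column $i$ of an orbit board for $\bfw_\pi$ arises from a reset event. To see this, let $g \in \calf$ with $g(i) = 1$; since every RG-word satisfies $g(1) = 1$, the position $1 < i$ witnesses an earlier occurrence of the value $1$, so Propositions~\ref{prop:whirl-rg} and~\ref{prop:whirl-rg-n} place $w_i$ in case~1, yielding $(w_i(g))(i) = 2$. Thus the value $1$ can never be preserved at column $i \geq 2$ by $w_i$, and each occurrence of $1$ in column $i$ is created by a case-3 reset in which $w_i$ changes some value $j \geq 2$ to $1$. Consequently, the sum $\sum_{f \in \calo} I_{i \mapsto 1}(f)$ over any $\bfw_\pi$-orbit $\calo$ counts exactly the reset events occurring at column $i$ across that orbit, and the theorem reduces to proving that these reset counts are the same for all columns $i \in \{2, 3, \ldots, n\}$.

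The plan is then to build a combinatorial structure on the orbit board that pairs reset events across columns, in the spirit of the red-light cycles of Section~\ref{subsec:whirl-surj} and the snakes of Section~\ref{sec:op}. When $w_i$ resets $f(i) = j$ to $1$, Propositions~\ref{prop:whirl-rg} and~\ref{prop:whirl-rg-n} guarantee the existence of another position $i'$ carrying value $j$ (lying before the first $j+1$, when such exists), and this $i'$ inherits the role of first occurrence of $j$. Its subsequent processing by $w_{i'}$, either within the same $\bfw_\pi$-application or in a later one, is then forced to be a stay, a reset, or an increment, and following this chain of forced moves through the orbit should yield cycles linking resets to one another. My first step would be to verify the conjectured structure on small cases (e.g.\ by extending the orbit computations for $\rg(3,2)$, $\rg(4,2)$, and $\rg(4)$) to pin down the exact rule before formalizing it.

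The principal obstacle is identifying the right chain rule and proving that the resulting cycles visit each of the columns $\{2, 3, \ldots, n\}$ equally often. Unlike the surjection setting of Section~\ref{subsec:whirl-surj}, where each cycle of red lights passes through all of $1, 2, \ldots, k$ exactly once, the reset structure on RG-words is complicated by two features: multiple resets of the same value $j$ can cascade within a single $\bfw_\pi$-transition, and stays can terminate a cascade prematurely. I expect the final argument to track each value $j \in \{2, \ldots, k\}$ separately, following how its first-occurrence position drifts around the orbit, and to treat $j < k$ (increment to $j+1$ possible) and $j = k$ (only stay or reset) in parallel, with the $\rg(n,k)$ and $\rg(n)$ variants unified by the shared fact that case~1 always fires for the value $1$ at positions $i \geq 2$. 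Once the chunking is established, the homomesy follows immediately from counting.
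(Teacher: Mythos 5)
Your first observation---that for $i\geq 2$ the value $1$ can never survive an application of $w_i$ (since position $1$ always witnesses an earlier occurrence of $1$), so that every $1$ appearing in column $i\geq 2$ of an orbit board is produced by a reset---is correct, and it is consistent with the direction the paper takes. But beyond that observation your proposal is a plan rather than a proof, and you say as much: you have not pinned down the chain rule that would transport the chunk/cycle idea from the injection, surjection, and order-preserving sections to RG-words, and you flag exactly that as the principal obstacle. That obstacle is real, and the paper does not overcome it---it sidesteps it entirely with a different technique.

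The actual proof never constructs any pairing or chunking. It compares two \emph{consecutive} columns $i$ and $i+1$ and combines three ingredients. First, Lemma~\ref{lem:rg-change-order} and Remark~\ref{rem:rg-change-order} show that cyclically shifting the composition order $\pi$ induces an orbit correspondence that preserves, for each column $h$, the multiset of values appearing in that column; this lets one assume without loss of generality that $i=\pi(2)$, i.e.\ that $w_i$ is the first whirl applied. Second, Lemma~\ref{lem:sphynxinator} shows that if $f(i)=1$ then $\left(\bfw_\pi^{-1}(f)\right)(i)\geq \left(\bfw_\pi^{-1}(f)\right)(i+1)$; combined with a short case analysis of Propositions~\ref{prop:whirl-rg} and~\ref{prop:whirl-rg-n}, this implies that starting from any $f\in\eso$ with $f(i)=1$ and $f(i+1)\neq 1$, repeated application of $\bfw_\pi$ reaches a function with a $1$ in column $i+1$ strictly before it can reach another function with a $1$ in column $i$. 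This gives only the one-sided inequality $\#\{f\in\eso : f(i)=1\}\leq\#\{f\in\eso : f(i+1)=1\}$ on every orbit $\eso$. Third---and this is the key idea missing from your proposal---Lemma~\ref{lem:global-avg} computes the global counts over all of $\calf$ (they are $\stir{n-1}{k}$ or $B_{n-1}$, independent of $i$) and finds them equal for all $i\in\{2,\dots,n\}$. A one-sided inequality that holds on every orbit, summed against a global equality, forces equality on every orbit. The cascading resets and premature stays you worried about are irrelevant to this argument precisely because it only ever needs a lower bound, never a bijection between reset events.

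So the gap in your write-up is not that a chunking rule provably fails to exist; it is that you never produce one, and the theorem does not require one. If you want to repair the proposal, the two lemmas to aim for are the column-comparison statement of Lemma~\ref{lem:sphynxinator} (which does require the reduction to $i=\pi(2)$ via the cyclic-shift lemma) and the global count of Lemma~\ref{lem:global-avg}; once those are in hand, the homomesy follows by the inequality-plus-global-count trick above, with no need to identify any finer combinatorial structure on the orbit board.
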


See Figure~\ref{fig:rg(5,3)-orbits} for an illustration of Theorem~\ref{thm:whirl-rg} for the orbits under the action of $\bfw$ (i.e., $\pi$ is the identity) over $\calf=\rg(5,3)$.  This theorem is another instance where there is a homomesy under an action that produces unpredictable orbit sizes, and for which the order of the map is unknown in general.

\begin{figure}[htb]
\begin{center}
\begin{minipage}{.35\textwidth}
\centering
\begin{tabular}{c|c}
$f$            & 12123\\\hline\vspace{-1.05 em}\\
$\bfw(f)$      & 11231\\\hline\vspace{-1.05 em}\\
$\bfw^{2}(f)$  & 12312\\\hline\vspace{-1.05 em}\\
$\bfw^{3}(f)$  & 12323\\\hline\vspace{-1.05 em}\\
$\bfw^{4}(f)$  & 12131\\\hline\vspace{-1.05 em}\\
$\bfw^{5}(f)$  & 12232\\\hline\vspace{-1.05 em}\\
$\bfw^{6}(f)$  & 11233\\\hline\vspace{-1.05 em}\\
$\bfw^{7}(f)$  & 12311\\\hline\vspace{-1.05 em}\\
$\bfw^{8}(f)$  & 12322\\\hline\vspace{-1.05 em}\\
$\bfw^{9}(f)$  & 12333\\\hline\vspace{-1.05 em}\\
$\bfw^{10}(f)$ & 12113\\\hline\vspace{-1.05 em}\\
$\bfw^{11}(f)$ & 12223\\\hline\vspace{-1.05 em}\\
$\bfw^{12}(f)$ & 11123\\\hline\vspace{-1.05 em}\\
$\bfw^{13}(f)$ & 12231\\\hline\vspace{-1.05 em}\\
$\bfw^{14}(f)$ & 11232\\\hline\vspace{-1.05 em}\\
$\bfw^{15}(f)$ & 12313
\end{tabular}
\end{minipage}
\begin{minipage}{.35\textwidth}
\centering
\begin{tabular}{c|c}
$g$            & 12213\\\hline\vspace{-1.05 em}\\
$\bfw(g)$      & 11223\\\hline\vspace{-1.05 em}\\
$\bfw^{2}(g)$  & 12331\\\hline\vspace{-1.05 em}\\
$\bfw^{3}(g)$  & 12132\\\hline\vspace{-1.05 em}\\
$\bfw^{4}(g)$  & 12233\\\hline\vspace{-1.05 em}\\
$\bfw^{5}(g)$  & 11213\\\hline\vspace{-1.05 em}\\
$\bfw^{6}(g)$  & 12321\\\hline\vspace{-1.05 em}\\
$\bfw^{7}(g)$  & 12332\\\hline\vspace{-1.05 em}\\
$\bfw^{8}(g)$  & 12133
\end{tabular}
\end{minipage}
\end{center}
\caption{The two $\bfw$-orbits for $\calf=\rg(5,3)$.  The left orbit has length 16 and the right orbit has length 9.  Notice that in the left orbit, there are the same numbers (four each) of functions $h$ satisfying any of the conditions $h(2)=1$, $h(3)=1$, $h(4)=1$, or $h(5)=1$.  In the right orbit, there are also the same numbers (two each) of functions $h$ satisfying any of the conditions $h(2)=1$, $h(3)=1$, $h(4)=1$, or $h(5)=1$.  This illustrates Theorem~\ref{thm:whirl-rg} for the case $n=5$, $k=3$, and $\pi$ is the identity.}
\label{fig:rg(5,3)-orbits}
\end{figure}

For RG-words, results for $\bfw$ need not necessarily extend to other $\bfw_\pi$ products like they did for $\inj_m(n,k)$ and $\sur_m(n,k)$, since a rearrangement of an RG-word is not always an RG-word.  In fact, these other whirling orders do not always yield the same orbit structure as $\bfw$.  Therefore, we prove this result keeping an arbitrary $\pi$ in mind.
\subsection{Proof of homomesy for whirling restricted growth words}
We will use several lemmas in the proof of Theorem~\ref{thm:whirl-rg}.  For the rest of this section, assume $n\geq 2$.

\begin{lem}\label{lem:rg-change-order}
Let $\calf$ be either $\rg(n)$ or $\rg(n,k)$ for some $1\leq k\leq n$.  Let $\pi,\sigma$ be permutations on $\{2,3,\dots,n\}$ where $\sigma(i)=\pi(i+1)$ for $2\leq i\leq n-1$ and $\sigma(n)=\pi(2)$.  Then for any $\bfw_\pi$-orbit $\eso=(f_1,f_2,\dots,f_\ell)$, there is a $\bfw_\sigma$-orbit $\eso'=(f_1',f_2',\dots,f_\ell')$ for which $f_j'=w_{\pi(2)}(f_j)$ for all $j\in [\ell]$.

Also, given $h\in[n]$, the multiset of values $f(h)$ as $f$ ranges over $\eso$ is the same as that for $f$ ranging over $\eso'$.
\end{lem}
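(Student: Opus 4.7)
The plan is to exploit the fact that $\bfw_\sigma$ and $\bfw_\pi$ are conjugate by the single whirl $w_{\pi(2)}$. Writing
\[
\bfw_\pi = H \circ w_{\pi(2)} \qquad\text{and}\qquad \bfw_\sigma = w_{\pi(2)} \circ H,
\]
where $H := w_{\pi(n)} \circ \cdots \circ w_{\pi(3)}$, one sees immediately that $\bfw_\sigma = w_{\pi(2)} \circ \bfw_\pi \circ w_{\pi(2)}^{-1}$. (The map $w_{\pi(2)}$ is a bijection on $\calf$ by the general invertibility statement for $w_i$ quoted earlier in the paper.) So if I set $f'_j := w_{\pi(2)}(f_j)$ for each $j$, then $\bfw_\sigma(f'_j) = w_{\pi(2)}(\bfw_\pi(f_j)) = f'_{j+1}$, and since $w_{\pi(2)}$ is injective the tuple $(f'_1,\dots,f'_\ell)$ is a $\bfw_\sigma$-orbit of the same length $\ell$.

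For the multiset claim, the key observation is that each basic whirl $w_i$ modifies its input only in position $i$. In particular, for every $h \neq \pi(2)$ and every $j$, one has $f'_j(h) = w_{\pi(2)}(f_j)(h) = f_j(h)$, so the multisets of values at position $h$ across $\eso$ and $\eso'$ agree on the nose.

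The remaining case $h = \pi(2)$ is the one genuine content of the lemma. Here I would use that $H = w_{\pi(n)} \circ \cdots \circ w_{\pi(3)}$ is a composition of whirls at indices all different from $\pi(2)$ (because $\pi$ is a permutation), so $H$ fixes the value of its argument at position $\pi(2)$. Hence
\[
f_{j+1}(\pi(2)) = (H \circ w_{\pi(2)})(f_j)(\pi(2)) = w_{\pi(2)}(f_j)(\pi(2)) = f'_j(\pi(2)),
\]
so the two multisets $\{f'_j(\pi(2))\}_{j=1}^\ell$ and $\{f_j(\pi(2))\}_{j=1}^\ell$ coincide up to a cyclic reindexing. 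I do not anticipate any real obstacle: the argument is pure bookkeeping, relying only on invertibility of whirling and on the trivial fact that $w_i$ touches only coordinate $i$, both of which are already established in the paper.
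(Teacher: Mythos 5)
Your proof is correct and follows essentially the same route as the paper: decompose $\bfw_\pi = H\circ w_{\pi(2)}$ and $\bfw_\sigma = w_{\pi(2)}\circ H$, conjugate, and use that $w_i$ only alters coordinate $i$ (so $H$ fixes position $\pi(2)$). Your write-up is in fact a bit more explicit than the paper's about why the multiset at position $h=\pi(2)$ is preserved, but the underlying argument is the same.
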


\begin{proof}
Let $\eso=(f_1,f_2,\dots,f_\ell)$ be a $\bfw_\pi$-orbit satisfying $\bfw_\pi(f_j) = f_{j+1}$ for all $j$.  Consider the subscripts to be mod $\ell$, the length of the orbit, so $f_{\ell+1}=f_1$ for instance.  Note that $\bfw_\pi=w_{\pi(n)}\circ\cdots\circ w_{\pi(3)}\circ w_{\pi(2)}$ and $\bfw_\sigma= w_{\pi(2)} \circ w_{\pi(n)}\circ \cdots\circ w_{\pi(3)}$.  Therefore, if we let $f_j'=w_{\pi(2)}(f_j)$ for all $j$, then $\bfw_\sigma(f_j') = f_{j+1}'$ for all $j$.  So $\eso'=(f_1',f_2',\dots,f_\ell')$ is a $\bfw_\sigma$-orbit.

For $h\not=\pi(2)$, $f_i(h)=f_i'(h)$.  Also, $f_i'(\pi(2))=f_{i+1}(\pi(2))$.  Thus for any $h\in[n]$, the multiset of values of $f(h)$ is the same for $f$ ranging over $\eso$ is the same as that for $\eso'$.
\end{proof}

\begin{remark}\label{rem:rg-change-order}
Suppose we have $\bfw_\pi = w_{\pi(n)}\circ\cdots\circ w_{\pi(3)}\circ w_{\pi(2)}$ and $\bfw_\sigma$ is some cyclic shift of this order of composition.  Then by applying Lemma~\ref{lem:rg-change-order} repeatedly, we get that every $\bfw_\pi$-orbit $\eso$ corresponds uniquely with a $\bfw_\sigma$-orbit $\eso'$ where the multisets of $f(h)$ values are the same for $f$ ranging over $\eso$ as for $\eso'$.

For example, on $\calf=\rg(6)$, $w_3\circ w_5\circ w_4\circ w_6\circ w_2$ and $w_4\circ w_6\circ w_2 \circ w_3\circ w_5$ satisfy the orbit correspondence above. However, in general two orders for applying the whirling maps do not necessarily yield the same orbit structure if they are not \emph{cyclic} rotations of each other.  For example, $w_3\circ w_5\circ w_4\circ w_6\circ w_2$ has an orbit structure different from that of $w_5\circ w_6\circ w_3\circ w_2\circ w_4$.  The former has orbit sizes $5,6,6,23,35,62,66$ while the latter has orbit sizes $4,6,20,34,139$.
\end{remark}

\begin{lemma}\label{lem:sphynxinator}
Let $f\in\calf$ where $\calf$ is either $\rg(n)$ or $\rg(n,k)$ for some $1\leq k\leq n$.  Let $\pi$ be a permutation of $\{2,3,\dots,n\}$.  Let $i=\pi(2)$ and suppose $2\leq i\leq n-1$.  If
$\left(\bfw_\pi^{-1}(f)\right)(i) < \left(\bfw_\pi^{-1}(f)\right)(i+1)$, then $f(i)\not=1$.
\end{lemma}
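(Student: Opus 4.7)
The plan is to reduce the claim about $\bfw_\pi^{-1}$ to a single application of $w_i$, then apply Propositions~\ref{prop:whirl-rg} and~\ref{prop:whirl-rg-n}. I will set $f^- := \bfw_\pi^{-1}(f)$ and $h := w_i(f^-)$. Since $i = \pi(2)$ is whirled \emph{first} in $\bfw_\pi$, the remaining factors $w_{\pi(3)},\ldots,w_{\pi(n)}$ in $f = (w_{\pi(n)}\circ\cdots\circ w_{\pi(3)})(h)$ act only at indices in $\{2,\ldots,n\}\setminus\{i\}$, so $h(i) = f(i) = 1$; and since $w_i$ changes only the value at index $i$, $h(i+1) = f^-(i+1)$. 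Thus the task will reduce to proving: if $w_i$ sends $f^-(i)$ to $1$, then $f^-(i) \geq f^-(i+1)$.

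Write $j := f^-(i)$. I would first dispose of three easy cases. If $j = 1$, then because every RG-word satisfies $f^-(1) = 1$ and $i \geq 2$, the first clause of Proposition~\ref{prop:whirl-rg}/\ref{prop:whirl-rg-n} would force $w_i(f^-)(i) = 2$, contradicting $h(i) = 1$. So $j \geq 2$. If $\calf = \rg(n,k)$ and $j = k$, then $f^-(i+1) \leq k = j$ is immediate from the codomain. If $\calf = \rg(n)$ and $j+1$ is not in the range of $f^-$, then the RG-word condition (Proposition~\ref{prop:rg-criteria}) forces every value $\geq j+1$ to be absent, so again $f^-(i+1) \leq j$.

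The heart of the argument will be the case when $j+1$ lies in the range of $f^-$. Here I would invoke the ``otherwise'' branch of Proposition~\ref{prop:whirl-rg} or~\ref{prop:whirl-rg-n}. This branch demands both that no $i' < i$ has $f^-(i') = j$ (so $i = \min\{h : f^-(h) = j\}$) and that the ``middle'' case fails. Because $f^-$ is an RG-word, $\min\{h : f^-(h) = j\} \leq \min\{h : f^-(h) = j+1\}$, with equality impossible since $f^-(i) = j \neq j+1$; this rules out one subcase of the failure. The only remaining subcase produces an $i' \neq i$ with $f^-(i') = j$ and $i' < \min\{h : f^-(h) = j+1\}$, necessarily $i' > i$. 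Assuming $f^-(i+1) \geq j+1$ for contradiction, the RG-word chain of minimum-position inequalities gives $\min\{h : f^-(h) = j+1\} \leq i+1$, yielding the impossible string $i < i' < i+1$. Therefore $f^-(i+1) \leq j = f^-(i)$.

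The main hurdle will be parsing the ``middle'' case of Propositions~\ref{prop:whirl-rg}/\ref{prop:whirl-rg-n} correctly: its failure splits into two subcases, one of which is blocked by the RG-word inequality at $i$, and the other of which delivers the crucial witness $i'$ sandwiched between $i$ and the first occurrence of $j+1$. Once that bookkeeping is pinned down, the remainder is a short manipulation of the minimum-position inequalities defining RG-words, and the argument will run identically for $\rg(n,k)$ and $\rg(n)$.
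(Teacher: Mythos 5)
Your proof is correct and follows essentially the same strategy as the paper's: both reduce to the single application of $w_i = w_{\pi(2)}$ (using that the other factors of $\bfw_\pi$ leave index $i$ untouched), use Propositions~\ref{prop:whirl-rg} and~\ref{prop:whirl-rg-n} to conclude that $i$ is the earliest position with value $j = f^-(i)$, and then derive a contradiction from $f^-(i+1) > j$ via the RG minimum-position inequalities. The paper phrases the contradiction as ``then the middle case of the proposition would apply, so $w_i$ leaves $f^-(i)$ unchanged,'' while you extract the witness $i'$ guaranteed by being in the ``otherwise'' branch and show it cannot fit between $i$ and $i+1$; these are dual formulations of the same argument, and both are valid.
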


\begin{proof}
Let $g=\bfw_\pi^{-1}(f)$ and suppose $g(i)<g(i+1)$. Let $g(i)=j$.
Since $g(i)<g(i+1)$, $g^{-1}(j+1)\not=\varnothing$.  By Propositions~\ref{prop:whirl-rg} and~\ref{prop:whirl-rg-n}, $f(i)=(\bfw(g))(i)=(w_i(g))(i)\not=1$.
\end{proof}

\begin{lemma}\label{lem:global-avg}
Let $i\in\{2,3,\dots,n\}$.
\begin{enumerate}
\item The number of $f\in\rg(n,k)$ satisfying $f(i)=1$ is $\stir{n-1}{k}$.
\item The number of $f\in\rg(n)$ satisfying $f(i)=1$ is $B_{n-1}$.
\end{enumerate}
\end{lemma}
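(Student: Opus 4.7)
The plan is to prove part (1) via an explicit bijection $\Phi: \{f \in \rg(n,k) : f(i) = 1\} \to \rg(n-1,k)$ that deletes position $i$, and then deduce part (2) either by summing over $k$ or by the analogous bijection $\{f \in \rg(n) : f(i) = 1\} \to \rg(n-1)$.

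Concretely, I would define $\Phi(f) = g$ where $g(j) = f(j)$ for $1 \leq j < i$ and $g(j) = f(j+1)$ for $i \leq j \leq n-1$. The inverse map $\Psi$ takes $g \in \rg(n-1,k)$ and inserts a $1$ at position $i$: $\Psi(g)(j) = g(j)$ for $j < i$, $\Psi(g)(i) = 1$, and $\Psi(g)(j) = g(j-1)$ for $j > i$. Two things must be checked for $\Phi$ to be well-defined: (a) $g$ still attains every value in $[k]$, and (b) $g$ satisfies the restricted-growth condition of Proposition~\ref{prop:rg-criteria}. For (a), since $i \geq 2$ and $f(1) = 1$, position $1$ survives the deletion so the value $1$ is still attained; the positions carrying values $2, \ldots, k$ are preserved (with a downward index shift when they exceed $i$), so surjectivity onto $[k]$ persists. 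For (b), for every $r < k$ the condition $\min\{j : g(j) = r\} \leq \min\{j : g(j) = r+1\}$ follows from the corresponding inequality for $f$ together with the observation that $\min\{j : g(j) = 1\} = 1 = \min\{j : f(j) = 1\}$, so the deletion does not perturb the first occurrence of $1$.

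For the inverse direction, checking that $\Psi(g) \in \rg(n,k)$ with $\Psi(g)(i) = 1$ is symmetric: inserting a $1$ at position $i \geq 2$ does not change the first occurrence of $1$ (still at position $1$), and for other values $r \geq 2$ the first-occurrence positions are merely shifted up by $1$ if they previously lay at index $\geq i$, which preserves all inequalities. That $\Phi \circ \Psi$ and $\Psi \circ \Phi$ are identity maps is immediate from the definitions. Thus $|\{f \in \rg(n,k) : f(i)=1\}| = |\rg(n-1,k)| = \stir{n-1}{k}$, giving (1).

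Part (2) then follows by summing over $k$:
\[
|\{f \in \rg(n) : f(i) = 1\}| = \sum_{k=1}^{n} |\{f \in \rg(n,k) : f(i) = 1\}| = \sum_{k=1}^{n} \stir{n-1}{k} = B_{n-1},
\]
using the standard identity $B_{n-1} = \sum_{k} \stir{n-1}{k}$ (with $\stir{n-1}{k} = 0$ for $k > n-1$). There is no real obstacle here; the only technicality worth emphasizing is that the hypothesis $i \geq 2$ is essential, since it guarantees that position $1$ is not deleted and hence the RG-word property $f(1)=1$ is inherited by $g$, and that block $1$ does not disappear upon deletion (so the block count is preserved in part (1)).
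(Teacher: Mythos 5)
Your proof is correct but takes a slightly different route from the paper's. The paper works entirely at the level of set partitions: it observes that $f(i)=1$ means $1$ and $i$ share a block, and then bijects such partitions with partitions of the $(n-1)$-element set $\{2,\dots,n\}$ by removing element $1$ from that shared block (and inserting it back into the block containing $i$). You instead stay within the RG-word formalism and construct a deletion/insertion bijection on position $i$; at the set-partition level your map removes element $i$ rather than element $1$. Both work for the same underlying reason: the shared block has at least two elements ($1$ and $i$), so removing either one leaves the block count intact and yields a partition of an $(n-1)$-element set. Your version has the advantage of explicitly verifying that the restricted-growth conditions survive deletion and insertion (which the paper leaves implicit in the RG-word/set-partition dictionary), at the cost of length; you derive part (2) by summing over $k$, whereas the paper's single argument covers both parts by simply dropping the block-count restriction. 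Your emphasis that $i\geq 2$ is what keeps position $1$ alive and the block count stable is exactly the right technical point to flag.
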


\begin{proof}
The condition $f(i)=1$ means $1$ and $i$ are in the same block of the partition of $[n]$ corresponding to $f$.  Such set partitions can be formed by first choosing a partition of $\{2,3,\dots,n\}$ and placing 1 in the same block as $i$.
\end{proof}

In particular, the counting formulas in Lemma~\ref{lem:global-avg} do not depend on $i$, which is what is important for the proof of Theorem~\ref{thm:whirl-rg}.

\begin{proof}[of Theorem~\ref{thm:whirl-rg}]
Let $\eso$ be a $\bfw_\pi$-orbit.  The goal is to prove that in $\eso$, there are the same number of functions $f\in\eso$ satisfying $f(i)=1$ as there are satisfying $f(j)=1$, when $2\leq i,j\leq n$.  We will approach this by showing that $\eso$ has the same number of functions $f$ satisfying $f(i)=1$ as $f(i+1)=1$, for $2\leq i\leq n-1$.

Let $i\in\{2,3,\dots,n-1\}$.  By Remark~\ref{rem:rg-change-order}, we may assume without loss of generality that $i=\pi(2)$.

Let $f\in\eso$ be a function satisfying $f(i)=1$ and $f(i+1)\not=1$.  By Lemma~\ref{lem:sphynxinator}, $\left(\bfw_\pi^{-1}(f)\right)(i)\geq \left(\bfw_\pi^{-1}(f)\right)(i+1)$.
Now let $r$ be the least positive integer such that $\left(\bfw_\pi^{r}(f)\right)(i)\geq \left(\bfw_\pi^{r}(f)\right)(i+1)$.  Such an $r$ must exist because when repeatedly applying $\bfw_\pi$ to $f$, we must eventually cycle back around to $\bfw_\pi^{-1}(f)$.

Let $g=\bfw_\pi^{r-1}$.  Then
\begin{equation}\label{eq:melt panic}g(i)< g(i+1) \text{ and } (\bfw_\pi(g))(i) \geq (\bfw_\pi(g))(i+1).\end{equation}  By Propositions~\ref{prop:whirl-rg} and~\ref{prop:whirl-rg-n}, either $(\bfw_\pi(g))(i)=g(i)+1$ or $(\bfw_\pi(g))(i+1)=1$.

Suppose $(\bfw_\pi(g))(i+1)\not=1$. Then $(\bfw_\pi(g))(i+1)=g(i+1)$ or $(\bfw_\pi(g))(i+1)=g(i+1)+1$, the latter of which violates Equation~(\ref{eq:melt panic}) because $(\bfw_\pi(g))(i)=g(i)+1$.  So $(\bfw_\pi(g))(i+1)=g(i+1)$.  Then from Equation~(\ref{eq:melt panic}), we have $(\bfw_\pi(g))(i)=g(i)+1=g(i+1)$.  However, we apply $w_{i+1}$ after $w_i$, so $w_{i+1}$ would have to add 1 to the value $g(i+1)$ by Propositions~\ref{prop:whirl-rg} and~\ref{prop:whirl-rg-n}.  This contradicts $(\bfw_\pi(g))(i+1)=g(i+1)$.  Thus, \[(\bfw_\pi^r(f))(i+1)=(\bfw_\pi(g))(i+1)=1.\]

By Lemma~\ref{lem:sphynxinator}, $\left(\bfw_\pi^{m}(f)\right)(i) \not= 1$ for all $1\leq m\leq r$.  Therefore, given any function $f\in\eso$ satisfying $f(i)=1$ and $f(i+1)\not=1$, as we repeatedly apply $\bfw_\pi$ to $f$, we obtain a function that sends $i+1$ to 1 before we get another function that sends $i$ to 1.  This implies $\#\{f\in\eso : f(i)=1\} \leq \#\{f\in\eso : f(i+1)=1\}$.

By Lemma~\ref{lem:global-avg}, $\#\{f\in\calf : f(i)=1\} = \#\{f\in\calf : f(i+1)=1\}$.  Therefore, since $\#\{f\in\eso : f(i)=1\} \leq \#\{f\in\eso : f(i+1)=1\}$ for \emph{every} $\bfw_\pi$-orbit $\eso$ over $\calf$, we must have $\#\{f\in\eso : f(i)=1\} = \#\{f\in\eso : f(i+1)=1\}$ for every orbit $\eso$.
\end{proof}

Theorem~\ref{thm:whirl-rg} implies another homomesic statistic for the action of $\bfw_\pi$ on $\rg(n,k)$ only.

\begin{cor}\label{cor:orig-rg-conj}
Let $\pi$ a permutation of $\{2,3,\dots,n\}$.  Under the action of $\bfw_\pi$ on $\rg(n,k)$, the statistic \[f\mapsto {k\choose 2}f(2)-f(n)\] is homomesic with average $k(k-2)$.
\end{cor}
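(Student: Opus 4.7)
The plan is to combine Theorem~\ref{thm:whirl-rg} with an analysis of how $w_n$ cycles the value at position $n$. Fix a $\bfw_\pi$-orbit $\eso$ and set $N_j := \#\{f \in \eso : f(n) = j\}$ for $j \in [k]$.

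I would first show that $N_1 = N_2 = \cdots = N_{k-1}$ on any orbit. Of the maps $w_{\pi(2)}, \ldots, w_{\pi(n)}$ composing $\bfw_\pi$, only $w_n$ affects the value at position $n$, so let $g$ be the function immediately before $w_n$ is applied in this composition, noting $g(n) = f(n)$. If $f(n) = g(n) = j < k$, then surjectivity of $g \in \rg(n,k)$ forces $j+1$ to appear in $g$, and the restricted-growth condition places the first occurrence of $j$ in $g$ strictly before the first occurrence of $j+1$. Since the first $j+1$ must sit at some index $<n$ (as $g(n) = j \neq j+1$), some $i' < n$ satisfies $g(i') = j$. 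Proposition~\ref{prop:whirl-rg} then gives $(w_n(g))(n) = j+1$, and the subsequent whirls leave position $n$ alone, so $(\bfw_\pi(f))(n) = f(n) + 1$ whenever $f(n) < k$. Therefore, for each $2 \leq j \leq k-1$, the only $f \in \eso$ with $(\bfw_\pi(f))(n) = j$ are those with $f(n) = j-1$; since $\bfw_\pi$ acts on $\eso$ as a cyclic permutation, $N_j = N_{j-1}$.

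Next, Theorem~\ref{thm:whirl-rg} applied with $i = 2$ yields $N_1 = \#\{f \in \eso : f(n) = 1\} = \#\{f \in \eso : f(2) = 1\}$. As every RG-word has $f(1) = 1$, the restricted-growth condition forces $f(2) \in \{1, 2\}$, so $\sum_{f \in \eso} f(2) = 2\#\eso - N_1$. Using $N_1 = \cdots = N_{k-1}$ together with $N_k = \#\eso - (k-1)N_1$,
$$\sum_{f \in \eso} f(n) \;=\; \sum_{j=1}^{k} j\, N_j \;=\; N_1 \binom{k}{2} + k N_k \;=\; k\,\#\eso - \binom{k}{2} N_1.$$
Combining these two sums,
$$\sum_{f \in \eso}\!\left[\binom{k}{2} f(2) - f(n)\right] = \binom{k}{2}(2\#\eso - N_1) - \left(k\,\#\eso - \binom{k}{2} N_1\right) = \left(2\binom{k}{2} - k\right)\#\eso = k(k-2)\,\#\eso,$$
yielding the claimed orbit average $k(k-2)$.

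The main obstacle is Step 1: one must verify that $w_n$ always increments $f(n)$ whenever $f(n) < k$, which in turn reduces to checking that the intermediate function $g$ has some $i' < n$ with $g(i') = f(n)$. This is the only place where the RG-word axioms (surjectivity plus restricted growth) are used nontrivially in the argument; once it is in hand, everything else is counting built on top of Theorem~\ref{thm:whirl-rg}.
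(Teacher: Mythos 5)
Your proof is correct and follows essentially the same route as the paper's: both arguments apply Theorem~\ref{thm:whirl-rg} with $i=2$, $j=n$, observe that $f(2)\in\{1,2\}$, and use the fact that $w_n$ increments $f(n)$ whenever $f(n)<k$ (derived from surjectivity plus the restricted-growth condition via Propositions~\ref{prop:rg-criteria} and~\ref{prop:whirl-rg}), so that the $f(n)$-values $1,\dots,k-1$ occur equally often in every orbit. The paper packages the final computation as a perturbation from the ``baseline'' case where $f(2)=2$ and $f(n)=k$ always, whereas you track the counts $N_j$ explicitly and do the algebra directly; these are the same idea with different bookkeeping, and your phrasing of the key step in terms of the intermediate function $g$ rather than $f$ itself is, if anything, slightly more careful.
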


\begin{proof}
For $f\in\rg(n,k)$, $f(2)$ is always 1 or 2.  Consider a $\bfw_\pi$-orbit $\eso$ with length $\ell(\eso)$.  If $f(2)=2$ and $f(n)=k$ for every $f\in\eso$, then the total value of $f\mapsto {k\choose 2}f(2)-f(n)$ across $\eso$ is $\frac{k(k-1)}{2}2\ell(\eso) - k \ell(\eso) = k(k-2)\ell(\eso)$.

Suppose instead that there are $c$ functions $f\in\eso$ that satisfy $f(2)=1$. Then this decreases the total value of ${k\choose 2}f(2)$ across $\eso$ by ${k\choose 2} c$.

From Theorem~\ref{thm:whirl-rg}, there are also $c$ functions $f\in\eso$ satisfying $f(n)=1$.  If $f(n)\not=k$, then $n$ cannot be the least $i$ for which $f(i)=f(n)$.  Thus, when $f(n)\not=k$, $\bfw_\pi$ adds 1 to the value of $f(n)$.  When $f(n)=k$, either $(\bfw_\pi(f))(n)=1$ or $(\bfw_\pi(f))(n)=k$ via Proposition~\ref{prop:whirl-rg}.  So there are also $c$ functions satisfying $f(n)=j$ for any $j\in[k-1]$.  In relation to the original case where $f(n)=k$ for all $f$, this decreases the total value of $f(n)$ across $\eso$ by $c(1+2+3+\cdots+(k-1)) = {k\choose 2} c$.

Therefore, the total value of $f\mapsto {k\choose 2}f(2)-f(n)$ across $\eso$ is \[k(k-2)\ell(\eso) - {k\choose 2} c + {k\choose 2} c = k(k-2)\ell(\eso).\]  Hence this statistic has average $k(k-2)$ on $\eso$.
\end{proof}
\subsection{Whirling on noncrossing RG-words}
We finish the paper by considering whirling on the set of RG-words corresponding to \emph{noncrossing} partitions.

\begin{defn}
A partition of $[n]$ is said to be \dfn{noncrossing} if whenever $i<j<k<\ell$, it is \emph{not} the case that $i$ and $k$ belong to one block of $\pi$ with $j$ and $\ell$ belonging to another block.
It is well-known that the number of noncrossing partitions of $[n]$ is given by the Catalan number $C_n=\frac{1}{n+1}\binom{2n}{n}$ and that the number of noncrossing partitions of $[n]$ into $k$ blocks is given by the Narayana number $N(n,k)=\frac{1}{n}\binom{n}{k}\binom{n}{k-1}$.
A \dfn{noncrossing RG-word} is an RG-word whose corresponding partition is noncrossing.  Let $\rgnc(n,k)$ and $\rgnc(n)$ denote the sets of all noncrossing RG-words in $\rg(n,k)$ and $\rg(n)$ respectively.
\end{defn}

The proof of the following proposition is obvious.

\begin{prop}\label{prop:abab}
An RG-word $f$ is noncrossing if and only if in the one-line notation, we do not have $abab$ in order (not necessarily consecutively) for some $a\not=b$.
\end{prop}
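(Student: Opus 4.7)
The plan is to prove the equivalence by contrapositive in both directions, using the single translation $f(i) = f(j)$ if and only if $i$ and $j$ lie in the same block of the partition $\pi$ encoded by the RG-word $f$. Both directions then unwind the definitions without serious work.

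For the forward direction, suppose the one-line notation of $f$ contains a subsequence $abab$ with $a \neq b$: that is, there exist indices $i < j < k < \ell$ with $f(i) = f(k) = a$ and $f(j) = f(\ell) = b$. Then $i$ and $k$ lie in the block labeled $a$, while $j$ and $\ell$ lie in the block labeled $b$, and these two blocks are distinct since $a \neq b$. This is precisely the crossing configuration forbidden by the definition, so $\pi$ is crossing, which is the contrapositive of what we want.

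For the reverse direction, suppose $\pi$ is crossing, so there exist $i < j < k < \ell$ with $i, k$ in one block $B_1$ and $j, \ell$ in another block $B_2 \neq B_1$. Set $a := f(i) = f(k)$ and $b := f(j) = f(\ell)$. Since the RG-word assigns each block a unique label and $B_1 \neq B_2$, we have $a \neq b$. Then $f(i) f(j) f(k) f(\ell) = abab$ with $a \neq b$, which is the forbidden subsequence pattern.

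I do not expect any real obstacle here; the proposition is essentially a restatement of the noncrossing condition in the RG-word language, and the only content is the observation that positions in a common block correspond exactly to equal values of $f$, so the pattern $abab$ at positions $i<j<k<\ell$ is the same data as a crossing between the blocks of labels $a$ and $b$.
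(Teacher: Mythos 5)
Your argument is correct and complete. Note that the paper states this proposition without supplying any proof, evidently regarding it as a routine translation of the noncrossing condition into the RG-word encoding. Your proof supplies exactly that routine verification: the only substance is the dictionary ``$f(i)=f(j)$ if and only if $i$ and $j$ lie in the same block,'' together with the fact that distinct blocks receive distinct labels, and both directions then unwind cleanly. In particular, the existence of positions $i<j<k<\ell$ with $f(i)=f(k)=a$, $f(j)=f(\ell)=b$, $a\neq b$ is the same data, stated in two languages, as the existence of a crossing pair of blocks, and you have made that identification explicit in both directions. There is nothing to correct, and no additional lemma from the paper is needed.
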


\begin{example}
Let $f=122{\color{red}1}3{\color{red}3}{\color{red}1}4{\color{red}3}$.  Then $f\in\rg(9,4)$ but $f\not\in\rgnc(9,4)$ because the red values are $1313$.
\end{example}

Due to the noncrossing condition, there is not such a simple description (like Proposition~\ref{prop:whirl-rg} or~\ref{prop:whirl-rg-n}) of how $w_i$ acts for $\calf=\rgnc(n,k)$ or $\calf=\rgnc(n)$. In the following example, we see $w_i$ can decrease $f(i)$ to something other than 1, or increase $f(i)$ by more than one.

\begin{ex}\label{ex:RG-NC}
For $\calf=\rgnc(6,4)$,
\[
123442\stackrel{w_1}{\longmapsto}
123442\stackrel{w_2}{\longmapsto}
123442\stackrel{w_3}{\longmapsto}
123442\stackrel{w_4}{\longmapsto}
123242\stackrel{w_5}{\longmapsto}
123242\stackrel{w_6}{\longmapsto}
123244
\]
so $\bfw(123442)=123244$.

For $\calf=\rgnc(6)$,
\[
123442\stackrel{w_1}{\longmapsto}
123442\stackrel{w_2}{\longmapsto}
123442\stackrel{w_3}{\longmapsto}
123442\stackrel{w_4}{\longmapsto}
123242\stackrel{w_5}{\longmapsto}
123222\stackrel{w_6}{\longmapsto}
123224
\]
so $\bfw(123442)=123224$.
\end{ex}

\begin{thm}\label{thm:rgnc}
Let $\calf=\rgnc(n,k)$ for some $1\leq k\leq n$.  Under the action of $\bfw$ on $\calf$, $I_{2\mapsto 1}-I_{n\mapsto 1}$ is $0$-mesic.
\end{thm}

We also conjecture that this homomesy holds for other whirling orders, and on $\calf=\rg(n)$.

\begin{conj}\label{conj:rgnc}
Fix $\calf$ to be either $\rgnc(n)$ or $\rgnc(n,k)$ for some $1\leq k\leq n$.  Let $\pi$ be a permutation of $\{2,3,\dots,n\}$.  Under the action of $\bfw_\pi$ on $\calf$, $I_{2\mapsto 1}-I_{n\mapsto 1}$ is $0$-mesic.
\end{conj}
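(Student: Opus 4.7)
The approach I would take mirrors the proof of Theorem~\ref{thm:whirl-rg}: combine a global count equality with a one-sided orbit-wise inequality to force equality in every orbit. For the global step, I claim $\#\{f \in \calf : f(2) = 1\} = \#\{f \in \calf : f(n) = 1\}$. This follows from the dihedral symmetry of noncrossing partitions. The reflection of $[n]$ (placed on a circle) fixing the point $1$, given by $i \mapsto n+2-i \pmod{n}$, preserves both the noncrossing property and the number of blocks of a partition, and it bijects noncrossing partitions in which $1$ and $2$ lie in a common block with those in which $1$ and $n$ do.

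Next, I would reduce to the case $\pi(2) = 2$ using a cyclic-rotation argument analogous to Lemma~\ref{lem:rg-change-order} and Remark~\ref{rem:rg-change-order}; those proofs use only the definition of whirling and so transfer directly to $\rgnc$. Under this reduction it suffices to show that for each $\bfw_\pi$-orbit $\eso$, $\#\{f \in \eso : f(2) = 1\} \leq \#\{f \in \eso : f(n) = 1\}$; summing this inequality over all orbits and invoking the global equality then forces orbit-wise equality.

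For the orbit-wise inequality I would imitate the argument of Lemma~\ref{lem:sphynxinator} and the main proof of Theorem~\ref{thm:whirl-rg}. Given $f \in \eso$ with $f(2) = 1$, set $g = \bfw_\pi^{-1}(f)$; because $\pi(2) = 2$, the map $w_2$ must have cycled the value at position $2$ all the way around the codomain to land on $1$, so in $g$ every candidate value $2, 3, \dots$ at position $2$ was forbidden by either restricted growth or the noncrossing condition. I would then track how these obstructions propagate under repeated applications of $\bfw_\pi$, with the goal of showing that before the orbit can return to another function with value $1$ at position $2$, some intermediate function must have value $1$ at position $n$.

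The hard part will be producing a clean replacement for the three-case description of $w_i$ on $\rg$ given by Propositions~\ref{prop:whirl-rg} and~\ref{prop:whirl-rg-n}. As Example~\ref{ex:RG-NC} illustrates, on noncrossing RG-words $w_i$ can increase $f(i)$ by more than one or decrease it to a value other than $1$, since noncrossingness imposes obstructions beyond restricted growth; so the Lemma~\ref{lem:sphynxinator} analysis needs a more careful case analysis of which later values block which candidate values at position $2$ and, dually, at position $n$. An alternative route worth exploring is an orbit-level bijection built from the reflection above: while this reflection relabels blocks and so does not naively conjugate a single $w_i$ to a single $w_j$, composing it with an appropriate relabeling might yield an involution on $\calf$ that swaps the two count statistics on each orbit.
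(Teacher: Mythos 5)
The statement you are trying to prove is left as an open conjecture in the paper: the authors explicitly say that it has been verified computationally for $n\leq 9$ and for many random whirling orders, but they do not offer a proof, and they flag exactly why the Theorem~\ref{thm:whirl-rg} argument does not carry over.

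Your global count equality is correct and is essentially the observation the paper itself makes in the discussion surrounding the conjecture (the authors note that $\#\{f\in\calf : f(2)=1\}=\#\{f\in\calf : f(n)=1\}$ holds for $\rgnc$ while the corresponding equality fails for intermediate indices, e.g.\ $5,4,5$ for $n=4$). The dihedral reflection $i\mapsto n+2-i \pmod n$ is a clean way to see it. The cyclic-rotation reduction to $\pi(2)=2$ also transfers without difficulty, as you say.

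The gap is in the orbit-wise inequality, and it is more serious than ``a more careful case analysis.'' The proof of Theorem~\ref{thm:whirl-rg} establishes, for each orbit $\eso$ and each adjacent pair $(i,i+1)$, the inequality $\#\{f\in\eso : f(i)=1\}\leq\#\{f\in\eso : f(i+1)=1\}$, and then chains these through $i=2,\dots,n-1$ before invoking the global equality. For $\rgnc$ that chain is provably unavailable: summing the adjacent orbit-wise inequality over all orbits would give $\#\{f\in\calf : f(2)=1\}\leq\#\{f\in\calf : f(3)=1\}$, contradicting the $5$-vs-$4$ count the paper records at $n=4$. So no analogue of Lemma~\ref{lem:sphynxinator} for the adjacent pair $(i,i+1)$ can hold, and you cannot ``track obstructions one step at a time.'' What you would need is a direct statement relating positions $2$ and $n$ --- something like ``between two successive visits of the $\bfw_\pi$-trajectory to a state with $f(2)=1$, at least one intermediate state has $f(n)=1$'' --- and you give no mechanism for proving this that is not the (unavailable) adjacent-index argument. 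Your alternative suggestion, an involution built from the dihedral reflection that interchanges the two statistics orbit-by-orbit, is likewise only a hope: the reflection does not conjugate $w_i$ to $w_{n+2-i}$ in any obvious way (it permutes the domain, and RG-words are tied to a left-to-right block normalization that the reflection destroys), so you would need to exhibit the relabeling and verify the conjugation, which you do not do. In short, your outline correctly identifies the two pieces a proof would need, but the second piece --- the orbit-level step --- is exactly the open content of the conjecture, and nothing in the proposal closes it.
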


We utilize some lemmas in the proof of Theorem~\ref{thm:rgnc}.

\begin{lemma}\label{lem:aab->abb}
Let $f\in\calf=\rgnc(n,k)$ for some $1\leq k\leq n$, and let $1<i<n$.  If $f(i)=f(i-1)$, then $(w_i(f))(i)=f(i+1)$.
\end{lemma}

\begin{proof}
Let $a=f(i-1)=f(i)$ and $b=f(i+1)$.  Clearly we can change the value of $f(i)$ from $a$ to $b$, and it will remain a noncrossing RG-word.  We will now explain why changing the value of $f(i)$ to the various other values $c$ that we encounter on the way from $a$ to $b$ when we apply $w_i$ will not yield have a noncrossing RG-word.\\
\textbf{Case 1: $a=b$.} Since $f\in\rgnc(n,k)$, there must be some $j$ for which $f(j)=c$.  So changing the value of $f(i)$ to $c$ will not yield a noncrossing RG-word by Proposition~\ref{prop:abab}.\\
\textbf{Case 2: $a<c<b$.} Since $f$ is an RG-word, there must exist $j<h<i-1$ such that $f(j)=a$ and $f(h)=c$.  So changing the value of $f(i)$ to $c$ will not yield a noncrossing RG-word by Proposition~\ref{prop:abab}.\\
\textbf{Case 3: $b<a<c$.} Since $f\in\rgnc(n,k)$, there must be some $j$ for which $f(j)=c$.  If $j<i-1$, then since $f$ is an RG-word, there must exist $h<j$ for which $f(h)=a$.  Then changing the value of $f(i)$ to $c$ will not yield a noncrossing RG-word by Proposition~\ref{prop:abab}.  On the other hand, suppose $j>i+1$.  Since $f$ is an RG-word, there must exist $h<i-1$ for which $f(h)=b$, in which case changing the value of $f(i)$ to $c$ will not yield a noncrossing RG-word by Proposition~\ref{prop:abab}.\\
\textbf{Case 4: $c<b<a$.}  Since $f$ is an RG-word, we must have $h<j<i-1$ satisfying $f(h)=c$ and $f(j)=b$.  So changing the value of $f(i)$ to $c$ will not yield a noncrossing RG-word by Proposition~\ref{prop:abab}.
\end{proof}

\begin{lemma}\label{lem:aa->a1}
Let $f\in \calf=\rgnc(n,k)$ for some $1\leq k\leq n$.
If $f(n-1)=f(n)$, then $(w_n(f))(n)=1$.
\end{lemma}

\begin{proof}
Let $a=f(n-1)$.  Let $a<b\leq k$.  Since $f\in\rgnc(n,k)$, there must exist $i<j<n-1$ satisfying $f(i)=a$ and $f(j)=b$.  So we cannot change the value of $f(n)$ to $b$ and still have a noncrossing RG-word by Proposition~\ref{prop:abab}.  However, changing the value of $f(n)$ to 1 will still yield a noncrossing RG-word.  So $w_n$ changes the value of $f(n)$ to 1.
\end{proof}

\begin{lemma}\label{lem:cycrot}
Let $\calf=\rgnc(n,k)$ for some $1\leq k\leq n$.
Let $f\in\calf$ for which $f(2)=1$.  Then $(\bfw(f))(n)=1$ and $(\bfw(f))(i)=f(i+1)$ for all $2\leq i< n$.
\end{lemma}

The proof of the above lemma follows easily by Lemmas~\ref{lem:aab->abb} and~\ref{lem:aa->a1}.  We are now ready to prove Theorem~\ref{thm:rgnc}.

\begin{proof}[of Theorem~\ref{thm:rgnc}]
Note that among noncrossing partitions of $[n]$ into $k$ blocks:
\begin{itemize}
\item there are $N(n-1,k)$ of them that have 1 and 2 in the same block, and
\item there are $N(n-1,k)$ of them that have 1 and $n$ in the same block.
\end{itemize}
The first is because we can take any noncrossing partition of $\{2,3,\ldots,n\}$ and place 1 in the same block as 2 and still have a noncrossing partition.  Likewise, the second is because we can take any noncrossing partition of $[n-1]$ and place $n$ in the same block as 1 and still have a noncrossing partition.  So $\calf$ contains the same number of functions $f$ satisfying $f(2)=1$ as functions $f$ satisfying $f(n)=1$.
By Lemma~\ref{lem:cycrot}, $\bfw$ is a bijection between functions $f$ satisfying $f(2)=1$ and functions $f$ satisfying $f(n)=1$.  So each $\bfw$-orbit contains equal numbers of these two types of functions, and thus $I_{2\mapsto 1}-I_{n\mapsto 1}$ is 0-mesic.
\end{proof}

\acknowledgements
\label{sec:ack}
We have benefitted from useful discussions with Yue Cai, David Einstein, Max Glick, Shahrzad Haddadan, Sam Hopkins, Thomas McConville, Matthew Plante, Elizabeth Sheridan Rossi, Richard Stanley, Jessica Striker, and Nathan Williams.  Computations leading to the initial conjectures were done in Sage (\cite{sage}).  We also thank two anonymous reviewers who have provided helpful feedback.

\nocite{*}
\bibliographystyle{abbrvnat}
\bibliography{bibliography}

\begin{thebibliography}{21}
\providecommand{\natexlab}[1]{#1}
\providecommand{\url}[1]{\texttt{#1}}
\expandafter\ifx\csname urlstyle\endcsname\relax
  \providecommand{\doi}[1]{doi: #1}\else
  \providecommand{\doi}{doi: \begingroup \urlstyle{rm}\Url}\fi

\bibitem[Cai and Readdy(2017)]{CR17}
Y.~Cai and M.~Readdy.
\newblock $q$-{S}tirling numbers: A new view.
\newblock \emph{Adv. in Appl. Math.}, 86:\penalty0 50--80, 2017.

\bibitem[Cameron and Fon{-}Der{-}Flaass(1995)]{CF95}
P.~Cameron and D.~Fon{-}Der{-}Flaass.
\newblock Orbits of antichains revisited.
\newblock \emph{European J. Combin.}, 16\penalty0 (6):\penalty0 545--554, 1995.

\bibitem[Defant and Lin(2024)]{DL24}
C.~Defant and J.~Lin.
\newblock Rowmotion on $m$-{T}amari and bi{C}ambrian lattices.
\newblock \emph{Comb. Theory}, 4\penalty0 (1), 2024.

\bibitem[Dumitriu et~al.(2003)Dumitriu, Tetali, and Winkler]{DTW03}
I.~Dumitriu, P.~Tetali, and P.~Winkler.
\newblock On playing golf with two balls.
\newblock \emph{SIAM J. Discrete Math.}, 16:\penalty0 604--615, 2003.

\bibitem[Einstein and Propp(2021)]{EP21}
D.~Einstein and J.~Propp.
\newblock Combinatorial, piecewise-linear, and birational homomesy for products
  of two chains.
\newblock \emph{Algebr. Comb.}, 4\penalty0 (2):\penalty0 201--224, 2021.

\bibitem[Einstein et~al.(2016)Einstein, Farber, Gunawan, Joseph, Macauley,
  Propp, and Rubinstein-Salzedo]{EFG+16}
D.~Einstein, M.~Farber, E.~Gunawan, M.~Joseph, M.~Macauley, J.~Propp, and
  S.~Rubinstein-Salzedo.
\newblock Noncrossing partitions, toggles, and homomesies.
\newblock \emph{Electron. J. Combin.}, 23\penalty0 (3):\penalty0 P3.52, 2016.

\bibitem[Elder et~al.(2024)Elder, Lafreniere, McNicholas, Striker, and
  Welch]{ELM+24}
J.~Elder, N.~Lafreniere, E.~McNicholas, J.~Striker, and A.~Welch.
\newblock Toggling, rowmotion, and homomesy on interval-closed sets.
\newblock \emph{J. Comb.}, 15\penalty0 (4):\penalty0 479--528, 2024.

\bibitem[Haddadan(2021)]{Had21}
S.~Haddadan.
\newblock Some instances of homomesy among ideals of posets.
\newblock \emph{Electron J. Combin.}, 28\penalty0 (1):\penalty0 P1.60, 2021.

\bibitem[Hopkins(2022)]{Hop22}
S.~Hopkins.
\newblock Minuscule doppelg{\"a}ngers, the coincidental down-degree
  expectations property, and rowmotion.
\newblock \emph{Exp. Math.}, 31\penalty0 (3):\penalty0 946--974, 2022.

\bibitem[Hopkins and Joseph(2022)]{HJ22}
S.~Hopkins and M.~Joseph.
\newblock The birational {L}alanne--{K}reweras involution.
\newblock \emph{Algebr. Comb.}, 5\penalty0 (2):\penalty0 227--265, 2022.

\bibitem[Hutchinson(1963)]{Hut63}
G.~Hutchinson.
\newblock Partioning algorithms for finite sets.
\newblock \emph{Communications of the ACM}, 6\penalty0 (10):\penalty0 613--614,
  1963.

\bibitem[Joseph(2019)]{Jos19}
M.~Joseph.
\newblock Antichain toggling and rowmotion.
\newblock \emph{Electron J. Combin.}, 26\penalty0 (1):\penalty0 P1.29, 2019.

\bibitem[Joseph and Roby(2018)]{JR18}
M.~Joseph and T.~Roby.
\newblock Toggling independent sets of a path graph.
\newblock \emph{Electron J. Combin.}, 25\penalty0 (1):\penalty0 P1.18, 2018.

\bibitem[Plante and Roby(2024)]{PR24}
M.~Plante and T.~Roby.
\newblock Rowmotion on the chain of {V}'s poset and whirling dynamics.
\newblock \emph{Preprint}, 2024.
\newblock \arxiv{2405.07984}.

\bibitem[Propp and Roby(2015)]{PR15}
J.~Propp and T.~Roby.
\newblock Homomesy in products of two chains.
\newblock \emph{Electron. J. Combin.}, 22\penalty0 (3):\penalty0 P3.4, 2015.

\bibitem[Roby(2016)]{Rob16}
T.~Roby.
\newblock Dynamical algebraic combinatorics and the homomesy phenomenon.
\newblock In \emph{Recent Trends in Combinatorics}, pages 619--652. Springer,
  2016.
\newblock Also available at
  \url{http://www2.math.uconn.edu/~troby/homomesyIMA2015Revised.pdf}.

\bibitem[Stanley(2011)]{Sta11}
R.~Stanley.
\newblock \emph{Enumerative combinatorics, volume 1, 2nd edition}.
\newblock Cambridge University Press, 2011.

\bibitem[Stein et~al.(2016)]{sage}
W.~Stein et~al.
\newblock \emph{{S}age {M}athematics {S}oftware ({V}ersion 7.3)}.
\newblock The Sage Development Team, 2016.
\newblock \url{http://www.sagemath.org}.

\bibitem[Striker(2018)]{Str18}
J.~Striker.
\newblock Rowmotion and generalized toggle groups.
\newblock \emph{Discrete Math. Theor. Comput. Sci.}, 20\penalty0 (1):\penalty0
  17, 2018.

\bibitem[Vorland(2018)]{Vor18}
C.~Vorland.
\newblock \emph{Multidimensional Toggle Dynamics}.
\newblock PhD thesis, North Dakota State University, 2018.

\bibitem[Walsh()]{Wal}
D.~Walsh.
\newblock Width-restricted finite functions.
\newblock
  \url{http://web.archive.org/web/20221011010649/https://capone.mtsu.edu/dwalsh/WIDTH_2.pdf}.

\end{thebibliography}
\label{sec:biblio}

\end{document}